\newtheorem{teor}{Theorem}[section]
\newtheorem{defi}{Definition}
\newtheorem{lema}[teor]{Lemma}
\newtheorem{prop}[teor]{Proposition}
\newtheorem{cor}[teor]{Corollary}
\newtheorem{rem}[teor]{Remark}
\newtheorem{quesrem}[teor]{Question and Remark}
\def\Ext {\mathop{\rm Ext}\nolimits}
\begin{document}

\title{Hearts of set-generated t-structures have a set of generators}

\author{Manuel Saor\'in \thanks{The  author is supported by the Grant PID2020-113206GB-I00, funded by MCIN/AEI/10.13039/501100011033, and the project 22004/PI/22, funded by the Fundaci\'on 'S\'eneca' of Murcia, both  with a part of FEDER funds. The paper was started during a visit of the author to the University of Verona in March-2023, within the 'Network on Silting Theory', funded by the Deutsche Forschungsgemeinschaft. Several discussions with Lidia Angeleri-H\"ugel, Rosie Laking, Jorge Vit\'oria and Frederik Marks during that visit were the starting point of the paper. Later on the author has also received many stimulating suggestions from Jan Stovicek.  I thank all these institutions and researchers for their help.  }\\ Departamento de Matem\'aticas\\
Universidad de Murcia, Aptdo. 4021\\
30100 Espinardo, Murcia\\
SPAIN\\ {\it msaorinc@um.es}}

\date{}

\maketitle

%\footnote{}

\begin{abstract}

{\bf We show that if $\alpha$ is a regular cardinal,  $\mathcal{D}$ is an $\alpha$-compactly generated triangulated category, in the sense of Neeman \cite{N}, and $\tau$ is a t-structure in $\mathcal{D}$ generated by a set of $\alpha$-compact objects, then the heart of $\tau$ is a locally $\alpha$-presentable (not necessarily Ab5) abelian category. This is a generalization of the fact that any compactly generated t-structure (in a compactly generated) triangulated category with coproducts has a heart which is a locally finitely presented Grothendieck category (\cite{SS}) since locally finitely presented abelian categories are Ab5. As a consequence, in a well-generated triangulated category any t-structure generated by a set of objects has a heart with a set of generators.
 }
\end{abstract}

{\bf Mathematics Subject Classification:  18G80, 18E10, 18C35, 18E05} 

\vspace*{1cm}

{\bf Introduction}

\vspace*{0.3cm}

t-Structures in triangulated categories were introduced by Beilinson, Bernstein and Deligne \cite{BBD} in their study of perverse sheaves on an algebraic or analytic variety. A t-structure in a triangulated category is a pair of subcategories satisfying certain axioms (see Subsection \ref{subsect.triangcats-tstructs}) that guarantee that their intersection has a structure of abelian category and the existence of a cohomological functors from the ambient triangulated category to this heart. This allows a sort of intrinsic cohomology theory where the cohomology spaces are objects of the triangulated category itself. t-Structures are nowadays a common tool in several branches of Mathematicas and Theoretical Physics and they have called the attention of many top mathematicians.

When $\mathcal{D}$ is a triangulated category with coproducts and $\tau$ is a t-structure in $\mathcal{D}$ with heart $\mathcal{H}$, a popular problem in the last two decades has been that of determining when $\mathcal{H}$ is a module or a Grothendieck category. The initial results concentrated mainly in the case when the ambient triangulated category is the (unbounded) derived category of a module or Grothendieck category and the t-structure is tilted from the standard one (see \cite{HKM}, \cite{CGM}, \cite{CMT}, \cite{PS1}, \cite{PS2}), later results studied the case when the t-structure is the one associated to a  (co)tilting module or a (co)silting t-structure (see \cite{St},\cite{AMV},  \cite{B}). For a general $\mathcal{D}$ (always with coproducts)  the efforts were addressed to study conditions on the t-structure that guaratee that its heart is a Grothendieck category, for which most of the time a model for $\mathcal{D}$, as a stable $\infty$-category or a strong stable derivator, were necessary (see \cite{Lurie}, \cite{SSV}, \cite{Laking}). A recent model-free result   in this vein (see \cite[Theorem 8.31]{SS}) states that the heart of any compactly generated t-structure is a locally finitely presented Grothendieck category. 

When determining whether $\mathcal{H}$ is a Grothendieck category, two facts should be checked. Namely, the Ab5 condition and the existence of a generator (or a set of generator) for $\mathcal{H}$. In the papers mentioned above, the hard part has been the Ab5 condition and the existence of a generator was proved either lifting the problem to the model, requiring here some sort of bounded cardinality condition (\cite{Lurie}, \cite{SSV}), or just by applying the associated cohomological functor $H:\mathcal{D}\longrightarrow\mathcal{H}$ to an adequate skeletally small thick subcategory of $\mathcal{D}$, e.g. the subcategory $\mathcal{D}^c$ of compact objects when $\mathcal{D}$ is compactly generated (see \cite{Bo}). 

The goal of this paper is to prove that if $\mathcal{D}$ is a well-generated triangulated category in the sense of Neeman \cite{N}, then any t-structure in $\mathcal{D}$ generated by a set of objects has a heart $\mathcal{H}$ which has a set of generators, even if $\mathcal{H}$ need not be Ab5. So there is no need of a model for $\mathcal{D}$, but instead the well-generation is required, a condition that is general enough as to include most triangulated categories with coproducts appearing in practice. Our initial motivation came from the already mentioned   \cite[Theorem 8.31]{SS}, that may be seen as the $\aleph_0$-version of the following result, an extended version of which is Theorem \ref{teor.main result2}. We refer to Section \ref{sect.preliminaries} for the terminology. 

\vspace*{0.3cm}

{\bf THEOREM  A}: Let $\mathcal{D}$ be a well-generated triangulated category, $\mathcal{X}$ a set of objects and let $\tau =((\mathcal{X}^{\perp_{\leq 0}}),\mathcal{X}^{\perp_{< 0}})$ be the t-structure generated by $\mathcal{X}$. Let $\alpha$ be the smallest of the regular cardinals $\beta$ such that $\mathcal{D}$ is $\beta$-compactly generated and all objects $\mathcal{X}$ are $\beta$-compact. Then the heart $\mathcal{H}_\tau$ of $\tau$ is a locally $\alpha$-presentable (not necessarily Ab5) abelian category.

\vspace*{0.3cm}

As an immediate consequence, one gets:

\vspace*{0.3cm}

{\bf COROLLARY B}: In a well-generated triangulated category any t-structure generated by a set of objects has a heart that has a set of generators.

\vspace*{0.3cm}

The organization of the paper goes as follows. In Section \ref{sect.preliminaries} we introduce essentially all the concepts used through the paper and give some auxiliary results that will be relevant for the proof of the theorem. In Section \ref{sec.coproducts-dirlimits}, which is crucial for the proof, we consider  any additive category with coproducts and show how to conveniently  express in it  coproducts of objects and morphisms between them as suitable direct limits.  In the final Section \ref{sec.The theorem}, the main Theorem \ref{teor.main result2} is stated at the beginning and its proof is developed through the whole section via several auxiliary results.

\section{Preliminaries} \label{sect.preliminaries}

Throughout the paper all categories that appear will be either additive or (always strictly  full) subcategories of additive categories. If $\mathcal{C}$ is such a category and $X$ and $Y$ are objects of $\mathcal{C}$, we shall denote by $\text{Hom}_\mathcal{C}(X,Y)$ the set of morphisms  $X\longrightarrow Y$ and sometimes, for simplification purposes, we will simply put $(X,Y)=\text{Hom}_\mathcal{C}(X,Y)$ or $(-,Y):=\text{Hom}_\mathcal{C}(-,Y):\mathcal{C}^{op}\longrightarrow\text{Ab}$ for the associated representable functor.    When $\mathcal{C}$ is either abelian or triangulated (see Subsection \ref{subsect.triangcats-tstructs}, $\mathcal{X}\subseteq\mathcal{C}$ is a subcategory and  $I\subseteq\mathbb{Z}$ is a subset, we will let $\mathcal{X}^{\perp_{I}}$ be the subcategory that consists of the objects $Y\in\mathcal{C}$ such that $\text{Ext}_\mathcal{C}^i(X,Y)=0$, for all $X\in\mathcal{X}$ and all $i\in I$. As it is customary, in case $\mathcal{C}$ is triangulated $\text{Ext}_\mathcal{C}^i(X,Y):=\text{Hom}_\mathcal{C}(X,Y[i])$.

We will say that an additive category $\mathcal{A}$ \emph{has (co)products} when it has arbitrary set-indexed (co)products. In such case, if $\mathcal{S}$ is any class (resp. set) of objects in $\mathcal{A}$, we will denote by $\text{Coprod}(\mathcal{S})$ the subcategory whose objects are those isomorphic to coproducts of objects in $\mathcal{S}$ and by $\text{Add}(\mathcal{S})$ the one whose objects are the direct summands of objects in $\text{Coprod}(\mathcal{S})$. When $I$ is a directed set, an $I$-\emph{direct (or $I$-directed) system} in $\mathcal{A}$ is just a functor $X:I\longrightarrow\mathcal{A}$, where $I$ is viewed as a category with a unique morphism $i\rightarrow j$ for each pair $(i,j)\in I\times I$ such that $i\leq j$.  Giving such an $X$ is clearly equivalent to giving  a pair families $[(X_i)_{i\in I},(f_{ij})_{i\leq j}]$, where $X_i=X(i)\in\mathcal{A}$ and $f_{ij}\in\text{Hom}_\mathcal{A}(X_i,X_j)$, for all $i,j\in I$ such that $i\leq j$, so that $f_{ii}=1_{X_i}$ and $f_{jk}\circ f_{ij}=f_{ik}$ whenever $i\leq j\leq k$. Frequently, when there is no confusion, we will omit the reference to the $f_{ij}$ and will say simply that $(X_i)_{i\in I}$ is an $I$-direct system.   

Given an additive category $\mathcal{A}$ and a subcategory $\mathcal{S}$, we will say that an object $A\in\mathcal{A}$ is a \emph{direct limit (or filtered colimit) of objects in $\mathcal{S}$} when there is a directed set $I$ and an $I$-direct system $(S_i)_{i\in I}$ in $\mathcal{S}$ such that the direct limit (=filtered colimit) $\varinjlim S_i$ exists in $\mathcal{A}$ and there is an isomorphism $A\cong\varinjlim S_i$. A particular situation will be relevant for us in this paper. When $\alpha$ is a regular cardinal, an \emph{$\alpha$-directed set} is a directed set $I$ such that any subset of cardinality $<\alpha$ has an upper bound in $I$ (note that 'directed set' is synonymous of '$\aleph_0$-directed set') . We will say that  $A$ is an  \emph{$\alpha$-directed colimit} of objects in $\mathcal{S}$ when there is an $\alpha$-directed set $I$ and an $I$-direct system  $(S_i)_{i\in I}$ in $\mathcal{S}$ such that $\varinjlim S_i$ exists in $\mathcal{A}$ and there is an isomorphism $A\cong\varinjlim S_i$. We will say that $\mathcal{A}$ \emph{has $\alpha$-directed colimits} when all $\alpha$-direct systems in $\mathcal{A}$ have a direct limit.

\subsection{$\alpha$-small and $\alpha$-presentable objects} 

All through this subsection $\mathcal{A}$ is an additive category that has coproducts and $\alpha$ is a regular cardinal.

We say that $X\in\mathcal{A}$ is an \emph{$\alpha$-small object} when, for each family $(A_i)_{i\in I}$ of objects of $\mathcal{A}$ and each morphism $f:X\longrightarrow\coprod_{i\in I}A_i$, there is a subset $J=J(f)\subseteq I$ such that $|J|<\alpha$, were $|-|$ denotes the cardinality of a set, and $f$ factors as a composition $f:X\longrightarrow\coprod_{j\in J}A_j\stackrel{\iota_J}{\longrightarrow}\coprod_{i\in I}A_i$, where $\iota_J$ is the canonical section. Note that $X$ is $\aleph_0$-small (=\emph{small}) exactly when the functor $\text{Hom}_\mathcal{A}(X,-):\mathcal{A}\longrightarrow\text{Ab}$ preserves coproducts. We will denote by $\text{Small}_\alpha (\mathcal{A})$ the subcategory of $\alpha$-small objects, which is closed under coproducts of $<\alpha$ objects.  

When we also assume that $\mathcal{A}$ has $\alpha$-directed colimits, an object $X\in\mathcal{A}$ is called \emph{$\alpha$-presentable} when, for each $\alpha$-directed system $(Y_i)_{i\in I}$, the canonical morphism $\varinjlim\text{Hom}_\mathcal{A}(X,Y_i)\longrightarrow\text{Hom}_\mathcal{A}(X,\varinjlim Y_i)$ is an isomorphism. We denote by $\text{Pres}^{<\alpha}(\mathcal{A})$ the subcategory of $\alpha$-presentable objects, which is closed under coproducts of $<\alpha$ objects and cokernels, when they exist. 
We will say that $\mathcal{A}$ is a \emph{locally $\alpha$-presentable category} when $\text{Pres}^{<\alpha}(\mathcal{A})$  is skeletally small and each object of $\mathcal{A}$ is an $\alpha$-directed colimit of $\alpha$-presentable objects.  It is not hard to see that, in case $\mathcal{A}$ is abelian, it is  locally $\alpha$-presentable  if and only if it has a set of generators which are $\alpha$-presentable.  $\aleph_0$-presentable objects are usually called \emph{finitely presented} and a locally $\aleph_0$-presentable category is simply called \emph{locally finitely presented}. The reader is referred to \cite{AR} for a more general version of local $\alpha$-presentability that does not require the category $\mathcal{A}$ to be additive. 

Already at this step it is convenient to give the following result, that will be relevant later on in the paper. 

\begin{prop} \label{prop.projecti-small-objects}
Let $\mathcal{A}$ be a additive category with arbitrary coproducts and $\alpha$-directed colimits, where $\alpha$ is a regular cardinal, and let $P$ be an object of $\mathcal{A}$. Consider the following assertions:

\begin{enumerate}
\item $P$ is $\alpha$-presentable.
\item $P$ is $\alpha$-small.
\end{enumerate}
The implication $(1)\Longrightarrow (2)$ always holds. When $P$ is a projective object, both assertions are equivalent. 
\end{prop}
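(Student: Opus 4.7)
For the implication $(1)\Longrightarrow(2)$ my plan is to realise an arbitrary coproduct as an $\alpha$-directed colimit of its ``small'' sub-coproducts. Let $\mathcal{J}=\{J\subseteq I:|J|<\alpha\}$, ordered by inclusion. Since $\alpha$ is regular, a union of strictly less than $\alpha$ members of $\mathcal{J}$ belongs again to $\mathcal{J}$, so the poset $\mathcal{J}$ is $\alpha$-directed. Assigning $J\mapsto\coprod_{j\in J}A_j$ and taking the canonical sections $\coprod_{j\in J}A_j\longrightarrow\coprod_{j\in J'}A_j$ (for $J\subseteq J'$) as transition maps yields an $\alpha$-directed system, and a routine check against the universal property of coproducts gives $\coprod_{i\in I}A_i\cong\varinjlim_{J\in\mathcal{J}}\coprod_{j\in J}A_j$ (this is precisely the kind of description developed in Section \ref{sec.coproducts-dirlimits}). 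Applying the $\alpha$-presentability of $P$, the canonical map $\varinjlim_{J}\text{Hom}_\mathcal{A}(P,\coprod_{j\in J}A_j)\longrightarrow\text{Hom}_\mathcal{A}(P,\coprod_{i\in I}A_i)$ is bijective, so any $f:P\longrightarrow\coprod_{i\in I}A_i$ factors through $\coprod_{j\in J}A_j$ for some $J$ with $|J|<\alpha$; this is exactly the $\alpha$-smallness of $P$.

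For $(2)\Longrightarrow(1)$ when $P$ is projective, my strategy is a \emph{lift--truncate--merge} argument. Let $(Y_i)_{i\in I}$ be an $\alpha$-directed system and put $Y=\varinjlim Y_i$. I would use the standard expression of $Y$ as a cokernel $\pi:\coprod_{i\in I}Y_i\twoheadrightarrow Y$ of a morphism $\psi:\coprod_{i\leq j}Y_i\longrightarrow\coprod_{i\in I}Y_i$ built from the transition maps. Given $g:P\longrightarrow Y$, projectivity lifts $g$ through $\pi$ to $\tilde g:P\longrightarrow\coprod_{i\in I}Y_i$; the $\alpha$-smallness of $P$ then factors $\tilde g$ through $\coprod_{j\in J}Y_j$ for some $J\subseteq I$ with $|J|<\alpha$; and $\alpha$-directedness of $I$ yields a common upper bound $k\in I$ of $J$, through which the sum of the $f_{jk}$ produces a morphism $h:P\longrightarrow Y_k$ whose image in $Y$ equals $g$. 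This gives the surjectivity of $\varinjlim\text{Hom}_\mathcal{A}(P,Y_i)\longrightarrow\text{Hom}_\mathcal{A}(P,Y)$.

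Injectivity requires applying the same three-step recipe to the ``relations'' side: if $h,h':P\longrightarrow Y_k$ induce the same morphism into $Y$, then $\iota_k\circ(h-h')$ is killed by $\pi$, so it lies in the image of $\psi$; projectivity lifts it to some $\eta:P\longrightarrow\coprod_{i\leq j}Y_i$, $\alpha$-smallness truncates $\eta$ to a sub-coproduct indexed by fewer than $\alpha$ relations, and $\alpha$-directedness pushes everything forward to a common stage $k'\geq k$ at which $f_{kk'}\circ(h-h')$ vanishes. The step I expect to be most delicate is precisely this injectivity argument: one needs a clean coequaliser description of $\alpha$-directed colimits and an interpretation of ``projective'' in the abstract additive setting that actually permits both lifts (through $\pi$ and through $\psi$). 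This is automatic when $\mathcal{A}$ is abelian, where projectivity means exactness of $\text{Hom}_\mathcal{A}(P,-)$, and this is the case that matters for the applications in later sections.
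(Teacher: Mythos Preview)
Your proposal is correct and follows essentially the same route as the paper: for $(1)\Rightarrow(2)$ you both express $\coprod_{i\in I}A_i$ as the $\alpha$-directed colimit over $\mathcal{P}_\alpha(I)$ of its small sub-coproducts (this is exactly Proposition~\ref{prop.coproduct-as-directlimit}), and for $(2)\Rightarrow(1)$ you both use the cokernel presentation of $\varinjlim Y_i$, lift via projectivity, truncate via $\alpha$-smallness, and merge via $\alpha$-directedness, once for surjectivity and once for injectivity. The paper organises the injectivity step slightly differently---it chooses a finite subposet $\Lambda$ with a \emph{maximum} $\mu$ so that the restricted comparison map $\kappa_\Lambda$ is trivially an isomorphism---but this is just a repackaging of your ``push everything to a common stage $k'$'' argument. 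Incidentally, the delicate point you single out (that $\iota_k\circ(h-h')$ being killed by the cokernel map must actually factor through $\psi$) is precisely the point the paper passes over quickly with the phrase ``which is equivalent to say that\ldots''; both arguments implicitly use that $\text{Hom}_\mathcal{A}(P,-)$ sends the cokernel sequence to an exact sequence, which is the natural reading of ``projective'' here.
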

\begin{proof}
$(1)\Longrightarrow (2)$ Let $(X_i)_{i\in I}$ be a family of objects in $\mathcal{A}$ and $f:P\longrightarrow\coprod_{i\in I}X_i$. Now we can apply Proposition \ref{prop.coproduct-as-directlimit} below with $\mathcal{J}=\mathcal{P}_\alpha (I)$ the set of subsets $J\subseteq I$ such that $|J|<\alpha$. Then $\coprod_{i\in I}X_i=\varinjlim_{J\in\mathcal{P}_\alpha (I)}X_J$, where $X_J:=\coprod_{j\in J}X_j$ for all $J\in\mathcal{P}_\alpha (I)$. The $\alpha$-presentability of $P$ tells us that the morphism  $f:P\longrightarrow\coprod_{i\in I}X_i=\varinjlim_{J\in\mathcal{P}_\alpha (I)}X_J$ factors in the form $P\stackrel{\tilde{f}}{\longrightarrow}X_J\stackrel{\iota_J}{\hookrightarrow}\coprod_{i\in I}X_i$, where $\iota_J$ is the canonical section, for some $J\in\mathcal{P}_\alpha (I)$. That is, $f$ factors through a subcoproduct $X_J=\coprod_{j\in J}X_j$, where $|J|<\alpha$. Therefore $P$ is $\alpha$-small. 

\vspace*{0.3cm}

$(2)\Longrightarrow (1)$ Let $[(Y_i)_{i\in I},(u_{ij}:Y_i\longrightarrow Y_j)_{i\leq j}]$ be an $\alpha$-direct system. By the explicit construction of the direct limit, we know that there is a cokernel sequence $\coprod_{i<j}Y_{ij}\stackrel{\varphi}{\longrightarrow}\coprod_{i\in I}Y_i\stackrel{p}{\longrightarrow}\varinjlim Y_i\rightarrow 0$, i.e. $p$ is the cokernel map of $\varphi$. Let now $f:P\longrightarrow\varinjlim Y_i$ be any morphism. By projectivity of $P$, we have a morphism $g:P\longrightarrow\coprod_{i\in I}X_i$ such that $p\circ g=f$. Note also that if $\iota_k:Y_k\longrightarrow\coprod_{i\in I}Y_i$ is the canonical section, then $u_k:=p\circ\iota_k:Y_k\longrightarrow\varinjlim Y_i$ is the canonical map to the direct limit. By the $\alpha$-smallness of $P$, the morphism $g$ factors in the form $P\stackrel{\tilde{g}}{\longrightarrow}Y_J=\coprod_{j\in J}Y_i\stackrel{\iota_J}{\hookrightarrow}\coprod_{i\in I}Y_i$, for some $J\in\mathcal{P}_\alpha (I)$. For each $j'\in J$, we shall denote by $\iota'_{j'}:Y_{j'}\longrightarrow\coprod_{j\in J}Y_j$ the canonical map to the $J$-coproduct, so that $\iota_J\circ\iota'_{j'}=\iota_{j'}$. Then we have $f=p\circ g=p\circ\iota_J\circ\tilde{g}$. Since $I$ is an $\alpha$-directed set we can pick up a $m\in I$ such that $j\leq m$, for all $j\in J$. The maps $u_{jm}:Y_j\longrightarrow Y_m$ ($j\in J$) induce a morphism $u:\coprod_{j\in J}Y_j\longrightarrow Y_m$ such that $u\circ\iota'_j=u_{jm}$, for all $j\in J$. It follows that $u_m\circ u\circ\iota'_j=u_m\circ u_{jm}=u_j=p\circ\iota_j=p\circ\iota_J\circ\iota'_j$, for all $j\in J$. It then follows that $u_m\circ u=p\circ\iota_J$, which in turn implies that $u_m\circ u\circ\tilde{g}=p\circ\iota_J\circ\tilde{g}=p\circ g=f$. In particular $f$ factors through the map $u_m:Y_m\longrightarrow\varinjlim Y_i$. This implies that $f$ is in the image of the canonical morphism $\kappa:\varinjlim\text{Hom}_\mathcal{A}(P,Y_i)\longrightarrow\text{Hom}_\mathcal{A}(P,\varinjlim Y_i)$. Hence this latter map is an epimorphism. 

We next prove that $\kappa$ is a monomorphism.  By properties of direct limits in $\text{Ab}$, we know that any element of $\varinjlim\text{Hom}_\mathcal{A}(P,Y_i)$ is of the form $(u_k)_*(f)=f\circ u_k=f\circ p\circ\iota_k$, for some $k\in I$ and  some morphism $f:P\longrightarrow Y_k$ in $\mathcal{A}$. 
Suppose that $(u_k)_*(f)\in\text{Ker}(\kappa)$, which  is equivalent to say that  the composition $P\stackrel{f}{\longrightarrow}Y_k\stackrel{\iota_k}{\longrightarrow}\coprod_{i\in I}Y_i$ factors through $\varphi$. Let then fix a morphism $g:P\longrightarrow\coprod_{i\leq j}Y_{ij}$ such that $\varphi\circ g=\iota_k\circ f$. Due to the the $\alpha$-smallness of $P$, we can select a subset $\tilde{\Lambda}\subseteq\{(i,j)\in I\times I\text{: }i\leq j\}$ such that $|\tilde{\Lambda}|<\alpha$ and $g$ factors through the inclusion $\coprod_{(i,j)\in\tilde{\Lambda}}Y_{ij}\hookrightarrow\coprod_{i\leq j}Y_{ij}$. Associated to $\tilde{\Lambda}$ and using that $I$ is $\alpha$-directed,  we can easily choose a subset $\Lambda\subseteq I$ such that:  (i) $| \Lambda|<\alpha$; (ii) all first and second components of the $(i,j)\in\tilde{\Lambda}$ belong to $\Lambda$; (iii) $k\in\Lambda$; (iv) $\Lambda$ has a maximum $\mu =\text{max}(\Lambda)$. We obviously have that $\tilde{\Lambda}\subseteq\{(\lambda, \lambda')\in\Lambda\times\Lambda\text{: }\lambda\leq\lambda'\}$. We then get the following  diagram, where the hook arrows are the obvious sections, $g=\iota\circ\tilde{g}$, $\varphi_\Lambda$ is the canonical morphism whose cokernel is $\varinjlim Y_\lambda$ and the exterior diagram is commutative, i.e. $\varphi\circ\iota\circ\tilde{g}=\varphi\circ g=\iota_k\circ f=\iota_\Lambda^I\circ\iota_k^\Lambda\circ f$. 

$$\xymatrix{ & P \ar[d]^f \ar[ddl]_{\tilde{g}} \\ & Y_k  \ar@{^(->}[d]^{\iota^{\Lambda}_k}\\ \coprod_{\lambda \leq \lambda'} Y_{\lambda \lambda'} \ar@{^(->}[d]^{\iota}\ar[r]^{\varphi_{\Lambda}} &  \coprod_{\Lambda} Y_\lambda \ar@{^(->}[d]^{\iota^{I}_{\Lambda}} \\ \coprod_{i \leq j}Y_{ij} \ar[r]^{\varphi} & \coprod_{I} Y_i}$$

Due to the monomorphic condition of $\iota$ and $\iota_\Lambda^I$, the upper triangle is commutative, i.e. $\varphi_\Lambda\circ\tilde{g}=\iota_k^\Lambda\circ f$. But this means that   $f$  is mapped by the canonical morphism $v_k:\text{Hom}_\mathcal{A}(P,Y_k)\longrightarrow\varinjlim_\Lambda\text{Hom}_\mathcal{A}(P,Y_\lambda)$ onto an element which is in the kernel of the canonical map $\kappa_\Lambda: \varinjlim_\Lambda\text{Hom}_\mathcal{A}(P,Y_\lambda)\longrightarrow\text{Hom}_\mathcal{A}(P,\varinjlim_\Lambda Y_\lambda)$. Since $\mu =\text{max}(\Lambda )$ the domain and codomain of $\kappa_\Lambda$ are both isomorphic to $\text{Hom}_\mathcal{A}(P,Y_\mu)$, from which we readily get that $\kappa_\Lambda$ is an isomorphism. It follows that $0=v_k(f)\cong u_{k\mu}\circ f$. We have then found an index $\mu\in I$, $\mu\geq k$, such that $u_{k\mu}\circ f=0$. This implies that $(u_k)_*(f)=u_k\circ f=u_\mu\circ u_{k\mu}\circ f=0$. Therefore $\kappa$ is a monomorphism, and hence an isomorphism. 
\end{proof}

Recall that if $\mathcal{C}$ is any additive category, then a functor $M:\mathcal{C}^{op}\longrightarrow\text{Ab}$ is said to be a \emph{finitely presented functor (or finitely presented right $\mathcal{C}$-module)} when there is an exact sequence of functors $(-,C')\stackrel{(-,\alpha)}{\longrightarrow}(-,C)\longrightarrow M\rightarrow 0$, where $(-,X):=\text{Hom}_\mathcal{C}(-,X)$ for each object $X\in\mathcal{C}$. We denote by $\text{mod}-\mathcal{C}$ the category of finitely presented functors, which is an additive category with cokernels. The Yoneda functor $Y_\mathcal{C}:\mathcal{C}\longrightarrow\text{mod}-\mathcal{C}$ ($C\rightsquigarrow (-,C)$) is fully faithful and its essential image is a class of projective generators of $\text{mod}-\mathcal{C}$. In the following result we summarize well-known results that will be needed later on, refering to the references given in the proof for the non-introduced terminology:

\begin{prop} \label{prop.C-modules}
Let $\mathcal{C}$ be an additive category with coproducts and $\alpha$ a regular cardinal. The following assertions hold:

\begin{enumerate}
\item $\text{mod}-\mathcal{C}$ has coproducts (and hence it is a cocomplete category) and  $Y_\mathcal{C}$ preserves coproducts, i.e. for any family $(C_i)_{i\in I}$ of objects of $\mathcal{C}$ the representable functor $(-,\coprod_{i\in I}C_i)$ is the coproduct of the $(-,C_i)$ in $\text{mod}-\mathcal{C}$.
\item $Y_\mathcal{C}=(-,C)$ is $\alpha$-small (equivalently $\alpha$-presentable) in $\text{mod}-\mathcal{C}$ if and only if $C$ is an $\alpha$-small object of $\mathcal{C}$. 
\item If $\mathcal{C}$ has weak kernels, then $\text{mod}-\mathcal{C}$ is abelian. In particular, if there is a set $\mathcal{S}\subseteq\mathcal{C}$ such that $\mathcal{C}=\text{Coprod}(\mathcal{S})$ then $\text{mod}-\mathcal{C}$ is abelian and $\{(-,S)\text{: }S\in\mathcal{S}\}$ is a set of projective generators of $\text{mod}-\mathcal{C}$.
\end{enumerate}
\end{prop}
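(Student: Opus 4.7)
The plan is to address the three parts of Proposition \ref{prop.C-modules} in order, relying throughout on Yoneda's lemma $\text{Hom}_{\text{mod}-\mathcal{C}}((-,C),F)\cong F(C)$ valid for every additive $F:\mathcal{C}^{op}\longrightarrow\text{Ab}$, together with the standard facts that the representables are projective in $\text{mod}-\mathcal{C}$ and that every $M\in\text{mod}-\mathcal{C}$ admits a presentation $(-,C')\to (-,C)\to M\to 0$.

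For (1), I would first show that every $M\in\text{mod}-\mathcal{C}$ sends coproducts in $\mathcal{C}$ to products in $\text{Ab}$: evaluating its presentation at $\coprod_i X_i$ and invoking $\text{Hom}_{\mathcal{C}}(\coprod_i X_i,-)\cong\prod_i\text{Hom}_{\mathcal{C}}(X_i,-)$ together with the exactness of arbitrary products in $\text{Ab}$ yields $M(\coprod_i X_i)\cong\prod_i M(X_i)$. Combined with Yoneda this gives $\text{Hom}((-,\coprod_i C_i),M)\cong M(\coprod_i C_i)\cong\prod_i M(C_i)\cong\prod_i\text{Hom}((-,C_i),M)$, which verifies the universal property of the coproduct and shows that $Y_{\mathcal{C}}$ preserves coproducts. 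Cocompleteness of $\text{mod}-\mathcal{C}$ then follows because it has both coproducts (just established) and cokernels.

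For (2), the equivalence between $\alpha$-smallness and $\alpha$-presentability of $(-,C)$ is immediate from Proposition \ref{prop.projecti-small-objects}, since $(-,C)$ is projective and $\text{mod}-\mathcal{C}$ has $\alpha$-directed colimits by (1). The implication $(-,C)$ $\alpha$-small $\Longrightarrow$ $C$ $\alpha$-small is a direct Yoneda translation, using that $Y_{\mathcal{C}}$ preserves coproducts. For the converse, given a morphism $\varphi:(-,C)\to\coprod_i M_i$ with each $M_i$ presented by $(-,C_i')\to (-,C_i)\to M_i\to 0$, projectivity of $(-,C)$ lifts $\varphi$ to $\tilde{\varphi}:(-,C)\to (-,\coprod_i C_i)$, which by Yoneda corresponds to $f:C\to\coprod_i C_i$ in $\mathcal{C}$. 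The $\alpha$-smallness of $C$ factors $f$ through $\coprod_{j\in J}C_j$ with $|J|<\alpha$; a naturality argument, comparing the cokernel presentations of $\coprod_{j\in J}M_j$ and $\coprod_i M_i$ obtained in (1), shows that the induced map $(-,C)\to\coprod_{j\in J}M_j$ recovers $\varphi$ after composing with the canonical $\coprod_{j\in J}M_j\to\coprod_i M_i$.

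For (3), the first claim is the classical result of Freyd and Auslander: once $\mathcal{C}$ has weak kernels, the kernels of morphisms $(-,\alpha):(-,X)\to (-,Y)$ admit representable projective presentations built from a weak kernel of $\alpha$, which is precisely what is needed to verify the abelian axioms in $\text{mod}-\mathcal{C}$. For the second assertion, when $\mathcal{C}=\text{Coprod}(\mathcal{S})$, I would build weak kernels by hand: for $f:C\to D$, the canonical map $\coprod_{(S,\psi)}S\longrightarrow C$ indexed over pairs $(S,\psi)$ with $S\in\mathcal{S}$, $\psi:S\to C$ and $f\circ\psi=0$, is a weak kernel, since any test morphism $X\to C$ with $X\cong\coprod_k T_k$, $T_k\in\mathcal{S}$, decomposes into components $T_k\to C$ each annihilated by $f$, hence factoring through some $\psi$. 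Projectivity of $\{(-,S):S\in\mathcal{S}\}$ is automatic by Yoneda; generation follows by presenting any $M$ as a quotient $(-,C)\twoheadrightarrow M$ and splitting $(-,C)\cong\coprod_i(-,S_i)$ via (1). The main obstacle I anticipate is the diagram chase in the converse direction of (2), where one must reconcile two descriptions of $\coprod_i M_i$—as the cokernel of $(-,\coprod_i C_i')\to (-,\coprod_i C_i)$ and as the coproduct of the $M_i$'s—to ensure that factoring the lifted map $\tilde{\varphi}$ through $(-,\coprod_{j\in J}C_j)$ really produces a compatible factorization through $\coprod_{j\in J}M_j$.
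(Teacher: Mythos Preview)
Your proposal is correct and in fact more explicit than the paper, which simply cites \cite[Lemma~1]{Kr3} for (1) and the first half of (3), \cite[Lemma~2.4]{SS} for the existence of weak kernels when $\mathcal{C}=\text{Coprod}(\mathcal{S})$, and \cite[Lemma~1]{Kr2} together with Proposition~\ref{prop.projecti-small-objects} for (2). Your arguments unpack exactly these references: the Yoneda computation in (1), the lift-and-factor argument in (2) via Proposition~\ref{prop.projecti-small-objects}, and the explicit weak-kernel $\coprod_{(S,\psi)}S\to C$ in (3) are the standard proofs behind those citations. One minor imprecision: in (1) you have only exhibited coproducts of \emph{representables} before asserting cocompleteness; you should add the routine observation that for arbitrary $M_i$ presented by $(-,C'_i)\to (-,C_i)$, the cokernel of $(-,\coprod_i C'_i)\to (-,\coprod_i C_i)$ serves as $\coprod_i M_i$. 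Your anticipated obstacle in (2) is not a real difficulty: the required compatibility is precisely the naturality of the cokernel construction with respect to the section $\coprod_{j\in J}C_j\hookrightarrow\coprod_{i\in I}C_i$, so the factorization of $\tilde\varphi$ through $(-,\coprod_{j\in J}C_j)$ descends automatically to a factorization of $\varphi$ through $\coprod_{j\in J}M_j$.
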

\begin{proof}
Assertion 1 and the initial part of assertion 3 are in \cite[Lemma 1]{Kr3}. Moreover if  $\mathcal{C}=\text{Coprod}(\mathcal{S})$, for some set $\mathcal{S}$, then $\mathcal{C}$ has weak kernels (see \cite[Lemma 2.4]{SS}). Assertion 2 is  \cite[Lemma 1]{Kr2} and can be also easily deduced from Proposition \ref{prop.projecti-small-objects}.
\end{proof}

\subsection{Preservation of $\alpha$-presentable objects}

The following result will be relevant later on in the paper.

\begin{prop} \label{prop.preservation-alpha-presented}
Let $(F:\mathcal{A}\longrightarrow\mathcal{B},G:\mathcal{B}\longrightarrow\mathcal{A})$ an adjoint pair of additive functors between cocomplete abelian categories, where $\mathcal{A}$ is supposed to be locally $\alpha$-presentable, for a given regular cardinal $\alpha$. The following assertions are equivalent:

\begin{enumerate}
\item $F$ preserves $\alpha$-presentable objects.
\item $G$ preserves $\alpha$-directed colimits.
\end{enumerate}
\end{prop}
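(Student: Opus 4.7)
The plan is a standard ``test against generators'' argument, which exploits the adjunction together with the characterization of $\alpha$-presentable objects via commutation of $\text{Hom}(-,\_)$ with $\alpha$-directed colimits. The direction $(2) \Longrightarrow (1)$ is essentially immediate; the reverse direction requires the local $\alpha$-presentability of $\mathcal{A}$ in order to pass from a pointwise Hom-isomorphism to an actual isomorphism in $\mathcal{A}$.

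For $(2) \Longrightarrow (1)$, I would take $A \in \text{Pres}^{<\alpha}(\mathcal{A})$ and an arbitrary $\alpha$-directed system $(Z_j)_{j\in J}$ in $\mathcal{B}$ (which has such colimits, being cocomplete). Chaining the natural isomorphisms
$$\Hom_\mathcal{B}(F(A), \varinjlim Z_j) \cong \Hom_\mathcal{A}(A, G(\varinjlim Z_j)) \cong \Hom_\mathcal{A}(A, \varinjlim G(Z_j)) \cong \varinjlim \Hom_\mathcal{A}(A, G(Z_j)) \cong \varinjlim \Hom_\mathcal{B}(F(A), Z_j),$$
obtained by adjunction, assumption (2), the $\alpha$-presentability of $A$ and adjunction again, one concludes that $F(A)$ is $\alpha$-presentable, provided one checks (by the naturality of these isomorphisms in the structure maps of the system) that the composite is the canonical comparison map.

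For $(1) \Longrightarrow (2)$, I would consider an $\alpha$-directed system $(Y_i)_{i\in I}$ in $\mathcal{B}$ and the canonical morphism $\varphi : \varinjlim G(Y_i) \longrightarrow G(\varinjlim Y_i)$ in $\mathcal{A}$ induced by the family $G(u_i)$. For any $\alpha$-presentable $A \in \mathcal{A}$ we know by (1) that $F(A)$ is $\alpha$-presentable in $\mathcal{B}$, and a symmetric chain of natural isomorphisms to the one above, together with a naturality check, identifies $\Hom_\mathcal{A}(A, \varphi)$ with the composite of these isomorphisms; hence $\Hom_\mathcal{A}(A, \varphi)$ is an isomorphism for every $A \in \text{Pres}^{<\alpha}(\mathcal{A})$. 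Finally, since $\mathcal{A}$ is locally $\alpha$-presentable, every $X \in \mathcal{A}$ can be written as an $\alpha$-directed colimit $X \cong \varinjlim A_k$ with $A_k \in \text{Pres}^{<\alpha}(\mathcal{A})$; as $\Hom_\mathcal{A}(X,-)$ turns this colimit into the inverse limit of the $\Hom_\mathcal{A}(A_k,-)$, and inverse limits of isomorphisms are isomorphisms, $\Hom_\mathcal{A}(X, \varphi)$ is an isomorphism for all $X$. Yoneda's lemma then forces $\varphi$ itself to be an isomorphism.

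The only step that genuinely needs attention is this last passage from ``$\Hom_\mathcal{A}(A, \varphi)$ is an iso for every $\alpha$-presentable $A$'' to ``$\varphi$ is an iso'', i.e.\ the statement that $\text{Pres}^{<\alpha}(\mathcal{A})$ is a strong generator of $\mathcal{A}$; this is precisely where the locally $\alpha$-presentable hypothesis is used. The other potential stumbling block is the book-keeping needed to confirm that the composite Hom-isomorphism obtained through the adjunction coincides (up to sign) with $\Hom_\mathcal{A}(A, \varphi)$ itself, which reduces to a straightforward naturality check for the unit and counit of $(F,G)$ against the transition maps of the directed systems.
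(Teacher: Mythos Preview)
Your proof is correct and follows essentially the same adjunction-and-comparison-map argument as the paper's proof, which packages both directions into a single chain of natural transformations $\varinjlim (F(-),Y_i)\cong\varinjlim (-,G(Y_i))\stackrel{\kappa}{\to}(-,\varinjlim G(Y_i))\stackrel{\psi_*}{\to}(-,G(\varinjlim Y_i))\cong(F(-),\varinjlim Y_i)$ and then evaluates at $\alpha$-presentable objects. The only cosmetic difference is in the final step of $(1)\Rightarrow(2)$: the paper concludes that $\psi$ is an isomorphism directly from the fact that $\alpha$-presentable objects form a set of generators of $\mathcal{A}$, whereas you pass through the colimit presentation of an arbitrary object and apply Yoneda---both are valid uses of local $\alpha$-presentability.
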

\begin{proof}

Let $Q\in\mathcal{A}$ be any object and  let  $(Y_i)_{i\in I}$ be any $\alpha$-directed system in $\mathcal{B}$. We let  $\psi: \varinjlim G(Y_i)\longrightarrow G(\varinjlim Y_i)$  be the canonical morphism and $\kappa:\varinjlim (-,G(Y_i))\longrightarrow(-,\varinjlim G(Y_i))$ the canonical natural transformation. We have the following sequence of functors $\mathcal{A}^{op}\longrightarrow\text{Ab}$ and natural transformations between them, whose composition is the canonical natural transformation $\varinjlim (F(-),Y_i)\longrightarrow (F(-),\varinjlim Y_i)$:
 
\begin{center}
$\varinjlim (F(-),Y_i)\stackrel{\cong}{\longrightarrow}\varinjlim (-,G(Y_i))\stackrel{\kappa}{\longrightarrow}(-,\varinjlim G(Y_i))\stackrel{\psi_*}{\longrightarrow}(-,G(\varinjlim Y_i))\stackrel{\cong}{\longrightarrow}(F(-),\varinjlim Y_i)$.
\end{center}

Note that, by adjunction, the first and fourth arrows are natural isomorphisms. 

\vspace*{0.3cm}

$(1)\Longrightarrow (2)$ Evaluate the last sequence at an $\alpha$-presentable object $Q$. Since $F(Q)$ is $\alpha$-presentable as well, we conclude that $\kappa_Q$ is an isomorphism and the composition of the four morphisms is also an isomorphism. Then $(\psi_*)_Q$ is an isomorphism. But since $\mathcal{A}$ is assumed to be locally $\alpha$-presentable, we get that  $(\psi_*)_Q$ is an isomorphism, for all $Q$ in a set of generators of $\mathcal{A}$. It follows that $\psi$ is an isomorphism. 

%Letand consider the canonical morphism $\psi: \varinjlim G(Y_i)\longrightarrow G(\varinjlim Y_i)$. If we denote by $\mathcal{Q}$ the subcategory of $\alpha$-presentable objects of $\mathcal{A}$, then $\psi$ is an isomorphism exactly when the inducing natural transformation is an isomorphism. But if $Q\in\mathcal{Q}$, using the adjunction,  we have a sequence of morphisms whose composition is the canonical map:  

%Due to the fact that $Q$ and $F(Q)$ are $\alpha$-presentable objects,  all morphisms except possibly $\psi_*$ and the composition are isomorphisms, it follows that also $\psi_*$ is an isomorphism.

\vspace*{0.3cm}

$(2)\Longrightarrow (1)$ In this case $\psi_*$ is a natural isomorphism and, for each $\alpha$-presentable object $Q$, the map $\kappa_Q$ is also an isomorphism. It then follows that, when evaluating at any such $Q$, the canonical morphism $\varinjlim (F(Q),Y_i)\longrightarrow (F(Q),\varinjlim Y_i)$ is always an isomorphism. That is, $F(Q)$ is an $\alpha$-presentable object.

%We consider the centered sequence of morphisms above, where $Q\in\mathcal{Q}$. In this case $\psi_*$ is an isomorphism. So the composition, which is easily seen to be the canonical map, is an isomorphism, for any $\alpha$-directed system $(Y_i)_{i\in I}$ in $\mathcal{B}$. Therefore $F(Q)$ is an $\alpha$-presentable object of $\mathcal{B}$, for all $Q\in\mathcal{Q}$. 
\end{proof}

\subsection{On Serre quotient functors with right adjoints in cocomplete abelian categories}

In this subsection we shall see that some known facts about localisation of Grothendieck categories hold in the more general setting of cocomplete abelian categories. We refer to \cite[Subsection 2.2]{SS} for the terminology used in this subsection. 

\begin{defi}
Let $\mathcal{A}$ be any abelian category. A \emph{torsion pair} or \emph{torsion theory} in it is a pair $\mathbf{t}=(\mathcal{T},\mathcal{F})$ of subcategories such that the following two conditions hold: 

\begin{enumerate}
\item[t1)] $\text{Hom}_\mathcal{A}(T,F)=0$ for all $T\in\mathcal{T}$ and $F\in\mathcal{F}$;
\item[t2)] For each object $A\in\mathcal{A}$, there is an exact sequence $0\rightarrow T_A\longrightarrow A\longrightarrow F_A\rightarrow 0$, where $T_A\in\mathcal{T}$ and $F_A\in\mathcal{F}$, that we shall call the \emph{torsion sequence} associated to $A$.
\end{enumerate}

A \emph{torsion (resp. torsionfree) class} in $\mathcal{A}$ is a subcategory $\mathcal{T}$ (resp. $\mathcal{F}$) that is the first (resp. second) component of a torsion pair.
\end{defi}

It is well-known that a torsion class is closed under quotients, extensions and all coproducts that exist in $\mathcal{A}$ and, dually, a torsionfree class is closed under subobjects, extensions and products.  It is also well-known that the $T_A$ and $F_A$ of condition (t2) are uniquely determined, up to isomorphism, and induce functors $t:\mathcal{A}\longrightarrow\mathcal{T}$ and $(1:t):\mathcal{A}\longrightarrow\mathcal{F}$ which are, respectively, right and left adjoint to the inclusion functor. One usually calls $t(A)=T_A$ and $(1:t)(A)=F_A$ the \emph{torsion subobject} and the \emph{torsionfree quotient} of $A$, respectively, with respect to $\mathbf{t}$.

The  torsion class $\mathcal{T}$ or the torsion pair $\mathbf{t}$ are \emph{hereditary} when $\mathcal{T}$ is closed under subobjects.

\begin{prop} \label{prop.Serre quotient functor}
Let $F:\mathcal{A}\longrightarrow\mathcal{B}$ be a Serre quotient functor that has a fully faithful right adjoint $G:\mathcal{B}\longrightarrow\mathcal{A}$ and suppose that $\mathcal{A}$ is cocomplete. The following assertions hold:

\begin{enumerate}
\item If $\mu :1_\mathcal{A}\longrightarrow G\circ F$ is the unit of the adjunction, then $\text{Ker}(\mu_A)$ and $\text{Coker}(\mu_A)$ are objects of  $\mathcal{T}:=\text{Ker}(F)$, for all $A\in\mathcal{A}$.
\item $\mathcal{T}$ is a hereditary torsion class in $\mathcal{A}$ and, for each $A\in\mathcal{A}$, the associated torsion sequence is $0\rightarrow\text{Ker}(\mu_A)\longrightarrow A\longrightarrow\text{Im}(\mu_A)\rightarrow 0$.
\item If $\mathcal{T}_0\subseteq\mathcal{T}$ is any subcategory such that $\mathcal{T}=\text{Gen}(\mathcal{T}_0)$, then $\text{Im}(G)=\mathcal{T}_0^{\perp_{0,1}}$.
\item If $\mathcal{X}$ is a set of generators of $\mathcal{A}$, then the set $\mathcal{T}_0$ of objects of $\mathcal{T}$ which are epimorphic image of objects of $\mathcal{X}$ satisfies that $\mathcal{T}=\text{Gen}(\mathcal{T}_0)$.
\end{enumerate}
\end{prop}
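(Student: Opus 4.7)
For assertion (1), my plan is to exploit the fact that, since $G$ is fully faithful, the counit $\varepsilon\colon F\circ G\to 1_{\mathcal{B}}$ is a natural isomorphism. The triangle identity $\varepsilon_{F(A)}\circ F(\mu_A)=1_{F(A)}$ then forces $F(\mu_A)$ to be an isomorphism. Since a Serre quotient functor is exact, applying $F$ to the short exact sequences $0\to\text{Ker}(\mu_A)\to A\to\text{Im}(\mu_A)\to 0$ and $0\to\text{Im}(\mu_A)\to GF(A)\to\text{Coker}(\mu_A)\to 0$ and using that $F(\mu_A)$ is iso yields $F(\text{Ker}(\mu_A))=0=F(\text{Coker}(\mu_A))$.

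For assertion (2), $\mathcal{T}=\text{Ker}(F)$ is a Serre subcategory because $F$ is exact, and it is closed under arbitrary coproducts because $F$, having a right adjoint, preserves them. Thus $\mathcal{T}$ satisfies all closure properties of a hereditary torsion class, and it suffices to produce a torsion sequence for each $A\in\mathcal{A}$; the candidate is $0\to\text{Ker}(\mu_A)\to A\to\text{Im}(\mu_A)\to 0$. The leftmost term lies in $\mathcal{T}$ by (1), and for any $T\in\mathcal{T}$ the inclusion $\text{Im}(\mu_A)\hookrightarrow GF(A)$ together with left-exactness of $\text{Hom}_{\mathcal{A}}(T,-)$ gives $\text{Hom}_{\mathcal{A}}(T,\text{Im}(\mu_A))\hookrightarrow\text{Hom}_{\mathcal{A}}(T,GF(A))\cong\text{Hom}_{\mathcal{B}}(F(T),F(A))=0$, which is the orthogonality required.

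For assertion (3), I plan to prove the two inclusions separately. For $\text{Im}(G)\subseteq\mathcal{T}_0^{\perp_{0,1}}$, given an extension $0\to G(B)\to X\to T\to 0$ with $T\in\mathcal{T}$, the exactness of $F$ makes $F(G(B))\to F(X)$ an isomorphism, and applying $G$ makes $GF(G(B))\to GF(X)$ an isomorphism too; combining this with naturality of $\mu$ and the fact that $\mu_{G(B)}$ is an isomorphism (a standard consequence of $G$ being fully faithful) produces a retraction of $G(B)\hookrightarrow X$ through $GF(X)$, so the extension splits. For the reverse inclusion, let $Y\in\mathcal{T}_0^{\perp_{0,1}}$ and write $K:=\text{Ker}(\mu_Y)$, $C:=\text{Coker}(\mu_Y)$, both in $\mathcal{T}$ by (1). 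Since $\mathcal{T}=\text{Gen}(\mathcal{T}_0)$, any epimorphism $\coprod T_0^{(i)}\twoheadrightarrow T$ with $T_0^{(i)}\in\mathcal{T}_0$ propagates the vanishing $\text{Hom}_{\mathcal{A}}(\mathcal{T}_0,Y)=0$ to $\text{Hom}_{\mathcal{A}}(\mathcal{T},Y)=0$, and applying this to the inclusion $K\hookrightarrow Y$ forces $K=0$. For $C$, the short exact sequence $0\to Y\to GF(Y)\to C\to 0$ combined with $\text{Hom}_{\mathcal{A}}(T_0,GF(Y))\cong\text{Hom}_{\mathcal{B}}(F(T_0),F(Y))=0$ yields, via the long exact Ext-sequence, an injection $\text{Hom}_{\mathcal{A}}(T_0,C)\hookrightarrow\text{Ext}_{\mathcal{A}}^1(T_0,Y)=0$; then picking any epimorphism from a coproduct of $\mathcal{T}_0$-objects onto $C$ shows all generators of $C$ map to zero, so $C=0$. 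I expect the cokernel-killing step to be the main technical point, as it is where the $\text{Ext}^1$-orthogonality genuinely enters, via the injection into $\text{Ext}^1_\mathcal{A}(T_0,Y)$.

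Finally, assertion (4) is essentially direct: for $T\in\mathcal{T}$, the generating set $\mathcal{X}$ provides an epimorphism $\coprod_{i\in I}X_i\twoheadrightarrow T$ with $X_i\in\mathcal{X}$, and the image $T_i$ of each component $X_i\to T$ is a subobject of $T$, hence lies in $\mathcal{T}$ (using that $\mathcal{T}$ is hereditary by (2)) and is an epimorphic image of $X_i$, so $T_i\in\mathcal{T}_0$; the induced epimorphism $\coprod T_i\twoheadrightarrow T$ then exhibits $T\in\text{Gen}(\mathcal{T}_0)$.
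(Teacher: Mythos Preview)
Your proof is correct. For assertions (1), (2), and (4) your argument coincides with the paper's (the paper in fact leaves (4) to the reader, and your argument is the expected one). The only genuine difference is in the reverse inclusion of assertion (3). The paper first proves the intermediate equality $\mathcal{T}_0^{\perp_{0,1}}=\mathcal{T}^{\perp_{0,1}}$, which in the generality at hand (no enough projectives or injectives assumed) requires invoking an external lemma to the effect that $\text{Ext}^1_\mathcal{A}(\coprod_i T_i,Y)$ embeds into $\prod_i\text{Ext}^1_\mathcal{A}(T_i,Y)$; it then argues, for $Y\in\mathcal{T}^{\perp_{0,1}}$, that the sequence $0\to Y\to GF(Y)\to C\to 0$ splits, whence the resulting section $C\hookrightarrow GF(Y)$ must vanish because $\text{Im}(G)\subseteq\mathcal{T}^{\perp_0}$. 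You bypass the coproduct--$\text{Ext}$ issue altogether: after killing the kernel, you apply the long exact $\text{Hom}/\text{Ext}$ sequence of $0\to Y\to GF(Y)\to C\to 0$ with a single $T_0\in\mathcal{T}_0$ in the first variable to obtain $\text{Hom}_\mathcal{A}(T_0,C)=0$, and then conclude $C=0$ directly from $C\in\mathcal{T}=\text{Gen}(\mathcal{T}_0)$. Your route is slightly more self-contained; the paper's has the minor advantage of isolating the equality $\mathcal{T}_0^{\perp_{0,1}}=\mathcal{T}^{\perp_{0,1}}$ as a separate fact.
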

\begin{proof}
Note that, by the fully faithful condition of $G$, the counit $\epsilon :F\circ G\longrightarrow 1_\mathcal{B}$ is a natural isomorphism (see the dual of \cite[Proposition 7.5]{HS}), so that $F$ is a dense functor. In particular,  $\mathcal{B}$ has coproducts (and hence  is also cocomplete) since $F$ preserves them. 

1) This assertion follows from the exactness of $F$ and the adjunction equations. 

2) By adjunction, we have that $\text{Hom}_\mathcal{A}(T,G(B))\cong\text{Hom}_\mathcal{B}(F(T),B)=0$, for all $T\in\mathcal{T}$ and $B\in\mathcal{B}$. This proves that $\text{Im}(G)\subseteq\mathcal{F}:=\mathcal{T}^{\perp_0}$, from which it readily follows that $(\mathcal{T},\mathcal{F})$ is a torsion pair with $0\rightarrow\text{Ker}(\mu_A)\longrightarrow A\longrightarrow\text{Im}(\mu_A)\rightarrow 0$ as torsion sequence associated to $A$, for all $A\in\mathcal{A}$. Exactness of $F$ also proves that $\mathcal{T}$ is closed under taking subobjects, so that $\mathcal{T}$ is a hereditary torsion class. 

3) Note that, by properties of adjunction,  $\text{Im}(G)$ consists of the objects $Y\in\mathcal{A}$ such that $\mu_Y:Y\longrightarrow (G\circ F)(Y)$ is an isomorphism, and we have already seen that $\text{Im}(G)\subseteq\mathcal{T}^{\perp_0}$. 

One readily sees that $\mathcal{T}_0^{\perp_0}=\mathcal{T}^{\perp_0}$. We claim that we also have that  $\mathcal{T}_0^{\perp_{0,1}}=\mathcal{T}^{\perp_{0,1}}$, for which we only need to prove the inclusion $\subseteq$. Let $Y\in\mathcal{T}_0^{\perp_{0,1}}$ and let $T\in\mathcal{T}$ be arbitrary. We need to prove that $\Ext_\mathcal{A}^1(T,Y)=0$. Consider an exact sequence $0\rightarrow T'\longrightarrow\coprod_{i\in I}T_i\longrightarrow T\rightarrow 0$, where $T_i\in\mathcal{T}_0$ for all $i\in I$. Note that, by assertion 2, we also have that $T'\in\mathcal{T}$. Applying the exact sequence of $\text{Ext}_\mathcal{A}(-,Y)$, we get an exact sequence $$0=\text{Hom}_\mathcal{A}(T',Y)\longrightarrow\text{Ext}^1_\mathcal{A}(T,Y)\longrightarrow\text{Ext}^1_\mathcal{A}(\coprod_{i\in I}T_i,Y),$$ and we have a monomorphism $\text{Ext}^1_\mathcal{A}(\coprod_{i\in I}T_i,Y)\rightarrowtail\prod_{i\in I}\text{Ext}_\mathcal{A}^1(T_i,Y)=0$ (see \cite[Lemma 2.5]{PSV1}). 

It remains to prove the equality $\text{Im}(G)=\mathcal{T}^{\perp_{0,1}}$. We start with the inclusion $\subseteq$, for which, due to the proof of assertion 2, we just need to prove that $\text{Im}(G)\subseteq\mathcal{T}^{\perp_1}$.
 Let us consider then an exact sequence $0\rightarrow G(B)\stackrel{u}{\longrightarrow}A\longrightarrow T\rightarrow 0$ (*) in $\mathcal{A}$, where $T\in\mathcal{T}$ and $B\in\mathcal{B}$. By exactness of $F$ and the definition of $\mathcal{T}$,  we get that $F(u):(F\circ G)(B)\longrightarrow F(A)$ is an isomorphism. By using the unit $\mu$, we then get a commutative diagram 

$$\xymatrix{G(B) \ar[d]^{\cong}_{\mu_{G(B)}} \ar[rr]^{u}&& A \ar[d]^{\mu_A}\\ (G \circ F \circ G)(B) \ar[rr]^{\cong}_{(G \circ F)(u)} & & (G\circ F)(A)}$$

from which we get that $\mu_A\circ u$ is an isomorphism, so that $u$ is a section and the sequence (*) splits. This proves that $\text{Im}(G)\subseteq\mathcal{T}_0^{\perp_{0,1}}$.

Conversely, let $Y\in\mathcal{T}^{\perp_{0,1}}$ and consider the map $\mu_Y:Y\longrightarrow (G\circ F)(Y)$. By assertion 1 we have that $\text{Ker}(\mu_Y)=0$ because the inclusion $\text{Ker}(\mu_Y)\hookrightarrow Y$ is the zero morphism. But then we have an exact sequence $0\rightarrow Y\stackrel{\mu_Y}{\longrightarrow}(G\circ F)(Y)\stackrel{p}{\longrightarrow} T\rightarrow 0$, where $T:=\text{Coker}(\mu_Y)\in\mathcal{T}$ by assertion 1. This exact sequence splits and, taking a section $s:T\longrightarrow (G\circ F)(Y)$, we conclude that $T=0$ since we have already proved that $\text{Im}(G)\subseteq\mathcal{T}^{\perp_{0}}$. Therefore $\mu_Y$ is an isomorphism and so $Y\in\text{Im}(G)$. 
\end{proof}

\subsection{Triangulated categories and t-structures} \label{subsect.triangcats-tstructs}

A \emph{triangulated category} $\mathcal{D}$ is an additive category with a self-equivalence $?[1]:\mathcal{D}\longrightarrow\mathcal{D}$ ($X\rightsquigarrow X[1]$) and a distiguished class of sequences $X\stackrel{f}{\longrightarrow} Y\stackrel{g}{\longrightarrow}Z\stackrel{h}{\longrightarrow}X[1]$, called \emph{triangles}, satisfying certain axioms (see [N] for the details), where Verdier's TRIV axiom ( see \cite[D\'efinition 1.1.1]{V}), usually called the \emph{octahedrom axiom (TR4)}, is Neeman's  Proposition 1.4.6. All through this subsection $\mathcal{D}$ is a triangulated category. We will put $?[n]=(?[1])^n$ for each $n\in\mathbb{Z}$. Without mention to the morphisms, we will write indistinctly triangles as above, as $X\longrightarrow Y\longrightarrow Z\longrightarrow X[1]$, as  $X\longrightarrow Y\longrightarrow Z\stackrel{+}{\longrightarrow} $ or even  $Z[-1]\longrightarrow X\longrightarrow Y\longrightarrow Z$.  It follows from the axioms that if $f:X\longrightarrow Y$ is a morphism in $\mathcal{D}$, then it can be completed to a triangle $X\stackrel{f}{\longrightarrow} Y\longrightarrow Z\stackrel{+}{\longrightarrow} $   and also to a triangle $Z'\longrightarrow X\stackrel{f}{\longrightarrow} Y\stackrel{+}{\longrightarrow}$. The objects $Z$ and $Z'$ are uniquely determined by $f$ up to (nonunique) isomorphisms, and will be called here the \emph{cone} and the \emph{cocone} of $f$, denoted $\text{cone}(f)$ and $\text{cocone} (f)$, respectively.  When $\mathcal{A}$ is an abelian category, a functor $H:\mathcal{D}\longrightarrow\mathcal{A}$ is a \emph{cohomological functor} when it takes any triangle $X\longrightarrow Y\longrightarrow Z\stackrel{+}{\longrightarrow} $ to an exact sequence $H(X)\longrightarrow H(Y)\longrightarrow H(Z)$. Due to the axioms TR0-TR4 (see [N]), this yields a long exact sequence $...H^{i-1}(Z)\longrightarrow H^i(X)\longrightarrow H^i(Y)\longrightarrow H^i(Z)\longrightarrow H^{i+1}(X)...$, where $H^{i}:=H\circ (?[i])$ for all $i\in\mathbb{Z}$. 

Given two subcategories $\mathcal{X}$ and $\mathcal{Y}$ of the triangulated category $\mathcal{D}$, we denote by $\mathcal{X}\star\mathcal{Y}$ the subcategory that consists of all objects $D\in\mathcal{D}$ that appear in some triangle $X\longrightarrow D\longrightarrow Y\stackrel{+}{\longrightarrow}$, where $X\in\mathcal{X}$ and $Y\in\mathcal{Y}$. The operation $\star$ is associative (see \cite{BBD}), i.e. $(\mathcal{X}\star\mathcal{Y})\star\mathcal{Z}=\mathcal{X}\star (\mathcal{Y}\star\mathcal{Z})$, for any three subcategories $\mathcal{X}$, $\mathcal{Y}$ and $\mathcal{Z}$. We put $\mathcal{X}^{\star n}=\mathcal{X}\star\stackrel{n+1}{.....}\star\mathcal{X}$, for each $n\in\mathbb{N}$ (so, e.g.,  $\mathcal{X}^{\star 0}=\mathcal{X}$ and $\mathcal{X}^{\star 1}=\mathcal{X}\star\mathcal{X}$). The objects of $\mathcal{X}^{\star n}$ are called \emph{$n$-fold extensions of objects of $\mathcal{X}$}. 

When $\mathcal{D}$ has countable coproducts and $$D_0\stackrel{f_1}{\longrightarrow}D_1\stackrel{f_2}{\longrightarrow}\cdots\stackrel{f_n}{\longrightarrow}D_n\stackrel{f_{n+1}}{\longrightarrow}$$ is a sequence of morphisms, we denote by $1-f$ the morphism $\coprod_{n\in\mathbb{N}}D_n\longrightarrow\coprod_{n\in\mathbb{N}}D_n$ whose $k$-th component $D_k\longrightarrow\coprod_{n\in\mathbb{N}}D_n$ is the composition $D_k\stackrel{\begin{pmatrix} 1_{D_k}\\ -f_{k+1} \end{pmatrix}}{\longrightarrow}D_k\coprod D_{k+1}\stackrel{inclus.}{\hookrightarrow}\coprod_{n\in\mathbb{N}}D_n$, for each $k\in\mathbb{N}$. Associated to the sequence, we  have the so-called \emph{Milnor triangle} $$\coprod_{n\in\mathbb{N}}D_n\stackrel{1-f}{\longrightarrow}\coprod_{n\in\mathbb{N}}D_n\longrightarrow\text{Mcolim}D_n\stackrel{+}{\longrightarrow}$$ and the object $\text{Mcolim}D_n=\text{cone}(1-f)$ is called the \emph{Milnor colimit} of the given sequence. 

A special type of triangulated categories is of special interest in this paper. We assume in this paragraph that $\mathcal{D}$ is a triangulated category with coproducts and that $\alpha$ is a regular cardinal.  We refer the reader to \cite[Chapter 3]{N} for the definition of  \emph{$\alpha$-perfect class (resp. set) of objects} in $\mathcal{D}$.
We say that $\mathcal{S}$ is a \emph{class (resp. set) of $\alpha$-perfect generators} of $\mathcal{D}$ when   $\mathcal{S}^{\perp_\mathbb{Z}}=0$ and $\mathcal{S}$ is $\alpha$-perfect. The category $\mathcal{D}$ is called \emph{$\alpha$-compactly generated} when it has an $\alpha$-perfect \underline{set} of generators that consists of $\alpha$-small objects (see \cite[Definition 8.1.6]{N}). We say that $\mathcal{D}$ is \emph{well-generated} when it is $\beta$-compactly generated, for some regular cardinal $\beta$. In this latter case, for any regular cardinal $\gamma$ one considers the unique maximal $\gamma$-perfect class, denoted $\mathcal{D}^\gamma$, that consists of $\gamma$-small objects. The objects of $\mathcal{D}^\gamma$ are called \emph{$\gamma$-compact}.  When $\mathcal{D}$ is well-generated, each  $\mathcal{D}^\gamma$ is a skeletally small thick subcategory of $\mathcal{D}$ closed under taking coproducts of $<\gamma$ objects,  and moreover $\mathcal{D}=\bigcup_\gamma\mathcal{D}^\gamma$, where $\gamma$ runs trough the class of regular cardinal (see \cite{Kr1}). In particular, any set of objects of $\mathcal{D}$ is contained in some $\mathcal{D}^\gamma$.

Recall (see [BBD]) that a \emph{t-structure} in the triangulated category $\mathcal{D}$ is a pair $\tau =(\mathcal{U},\mathcal{V})$ of subcategories that satisfy the following properties:

\begin{enumerate}
\item[t-S1] $\text{Hom}_\mathcal{D}(U,V[-1])=0$, for all $U\in\mathcal{U}$ and $V\in\mathcal{V}$;
\item[t-S2] $\mathcal{U}[1]\subseteq\mathcal{U}$ (or $\mathcal{V}[-1]\subseteq\mathcal{V}$);
\item[t-S3] $\mathcal{U}\star (\mathcal{V}[-1])=\mathcal{D}$.
\end{enumerate}
In such case $\mathcal{U}$ and $\mathcal{V}$ are called the \emph{aisle} and \emph{coaisle} of the t-structure, respectively, and the intersection $\mathcal{H}:=\mathcal{U}\cap\mathcal{V}$, which is called the \emph{heart} of $\tau$, has a natural structure of abelian category where the short exact sequences 'are' the triangles in $\mathcal{D}$ with their three vertices in $\mathcal{H}$. To this abelian category it is naturally associated a cohomological functor $H_\tau^0:\mathcal{D}\longrightarrow\mathcal{H}$. We will call it \emph{the cohomological functor associated to $\tau$}.

Given a class $\mathcal{S}$ of objects of $\mathcal{D}$, the pair $\tau_\mathcal{S}=( ^\perp(\mathcal{S}^{\perp_{\leq 0}}),\mathcal{S}^{\perp_{< 0}})$ satisfies condition t-S1 and t-S2. When it also satisfies t-S3 and hence $\tau_\mathcal{S}$ is a t-structure, we call it the \emph{t-structure generated by $\mathcal{S}$}. When  $\mathcal{D}$ has coproducts and  $\mathcal{S}$ is a \underline{set} of objects,   it is frequently the case that $\tau_\mathcal{S}$ is a t-structure.  For example, by the results of \cite{N2} (see Proposition \ref{prop.Neeman} below), if $\mathcal{D}$ is well-generated then, for any set of objects $\mathcal{S}$, the pair $\tau_\mathcal{S}$ is a t-structure. Even more, Neeman proves that in that case $\mathcal{U}_\mathcal{S}:= ^\perp(\mathcal{S}^{\perp_{\leq 0}})$ is the smallest subcategory of $\mathcal{D}$ which contains $\mathcal{S}$ and is closed under extensions, coproducts and positive shifts.   A t-structure $\tau$ is said to be \emph{generated by a set} when $\tau =\tau_\mathcal{S}$, for some set of objects $\mathcal{S}$. 

%\begin{ejem}
%The canonical example of t-structure is the following. Let $\mathcal{A}$ be a cocomplete abelian category for which the (unbounded) derived category $\mathcal{D}(\mathcal{A})$ is well-defined, i.e. it has $\text{Hom}$-set (e.g. when $\mathcal{A}$ is a Grothendieck category). If $\mathcal{D}^{\leq 0}(\mathcal{A})$ (resp. $\mathcal{D}^{\geq 0}(\mathcal{A})$) denotes the subcategory with objects those of cohomology concentrated in degrees $\leq 0$ (resp. $\geq 0$), then $\tau =(\mathcal{D}^{\leq 0}(\mathcal{A}),\mathcal{D}^{\geq 0}(\mathcal{A}))$ is the \emph{canonical t-structure} in $\mathcal{D}(\mathcal{A})$. The embedding functor $\mathcal{A}\rightarrowtail\mathcal{D}(\mathcal{A})$ ($A\rightsquigarrow A[0]$) induces an equivalence of categories $\mathcal{A}\stackrel{\cong}{\longrightarrow}\mathcal{H}$, where $\mathcal{H}$ is the heart of $\tau$. 
%\end{ejem}

\section{Coproducts as direct limits} \label{sec.coproducts-dirlimits}

All throughout the section $\mathcal{A}$ will be an additive category with coproducts. Suppose that $(X_i)_{i\in I}$ a family of objects in $\mathcal{A}$. For any subset $J\subseteq I$, we will put $X_J:=\coprod_{i\in J}X_i$ and, for $J\subseteq J'\subseteq I$, let $\iota_{JJ'}:X_J\rightarrowtail X_{J'}$ the canonical section. We also put $\iota_i^J:=\iota_{\{i\}J}:X_i\rightarrowtail X_J$, for each $i\in J$, so that, in particular, $\iota_j^I=\iota_j:X_j\longrightarrow X_I=\coprod_{i\in I}X_i$ is the canonical injection into the coproduct, for all $j\in I$. The following result of Category Theory is folklore. We sketch the basic ideas of its proof for the convenience of the reader.

\begin{prop} \label{prop.coproduct-as-directlimit}
Let $\mathcal{A}$ be an additive category with coproducts and  $(X_i)_{i\in I}$ a family of objects in $\mathcal{A}$. Let $\mathcal{J}\subseteq\mathcal{P}(I)$ be a subset that is directed with respect to the inclusion order of $\mathcal{P}(I)$ and satisfies that $\bigcup_{J\in\mathcal{J}}J=I$. Then $[(X_J)_{J\in\mathcal{J}},(\iota_{JJ'}:X_J\rightarrowtail X_{J'})_{J\subseteq J'\text{ }J,J'\in\mathcal{J}}]$ is a direct system in $\mathcal{A}$ that has $(\coprod_{i\in I}X_i, (\iota_J:=\iota_{JI}:X_J\rightarrowtail\coprod_{i\in I}X_i)_{J\in\mathcal{J}})$ as its direct limit.
\end{prop}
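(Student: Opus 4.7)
The plan is to first check that the data genuinely form a directed system and compatible cocone, and then to verify the universal property by reducing everything to the universal property of the coproduct $\coprod_{i\in I}X_i$ itself.

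First I would observe that $\mathcal{J}$, ordered by inclusion, is a directed poset by hypothesis, and that the canonical sections satisfy $\iota_{J'J''}\circ\iota_{JJ'}=\iota_{JJ''}$ whenever $J\subseteq J'\subseteq J''$ (this is immediate from the universal property of coproducts, since both sides are determined by their compositions with each $\iota_i^J$ and both give $\iota_i^{J''}$). This makes $(X_J)_{J\in\mathcal{J}}$ a direct system and $(\iota_J:X_J\rightarrowtail\coprod_{i\in I}X_i)_{J\in\mathcal{J}}$ a cocone over it.

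Next, to prove the universal property, I fix any cocone $(f_J:X_J\longrightarrow Y)_{J\in\mathcal{J}}$, i.e.\ morphisms satisfying $f_{J'}\circ\iota_{JJ'}=f_J$ for $J\subseteq J'$ in $\mathcal{J}$. Since $\bigcup_{J\in\mathcal{J}}J=I$, for every $i\in I$ I can choose some $J_i\in\mathcal{J}$ with $i\in J_i$ and set $g_i:=f_{J_i}\circ\iota_i^{J_i}:X_i\longrightarrow Y$. A quick check using directedness of $\mathcal{J}$ shows that $g_i$ does not depend on the choice of $J_i$: given another choice $J_i'$, pick $K\in\mathcal{J}$ containing $J_i\cup J_i'$; then $f_{J_i}\circ\iota_i^{J_i}=f_K\circ\iota_{J_iK}\circ\iota_i^{J_i}=f_K\circ\iota_i^K$, and symmetrically for $J_i'$. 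Now the universal property of $\coprod_{i\in I}X_i$ produces a unique $f:\coprod_{i\in I}X_i\longrightarrow Y$ with $f\circ\iota_i=g_i$ for all $i\in I$.

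Then I would verify $f\circ\iota_J=f_J$ for every $J\in\mathcal{J}$: by the universal property of $X_J=\coprod_{i\in J}X_i$, it suffices to check this after precomposing with each $\iota_i^J$, and the same directedness argument as above (picking $K\supseteq J\cup J_i$ in $\mathcal{J}$) gives $f\circ\iota_J\circ\iota_i^J=f\circ\iota_i=g_i=f_{J_i}\circ\iota_i^{J_i}=f_K\circ\iota_i^K=f_J\circ\iota_i^J$. Finally, for uniqueness, any $f':\coprod_{i\in I}X_i\longrightarrow Y$ satisfying $f'\circ\iota_J=f_J$ for all $J\in\mathcal{J}$ must in particular give $f'\circ\iota_i=f'\circ\iota_{J_i}\circ\iota_i^{J_i}=f_{J_i}\circ\iota_i^{J_i}=g_i=f\circ\iota_i$ for every $i\in I$, so $f'=f$ by the universal property of the coproduct.

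The only mildly delicate point is the independence-of-choice step and its reuse in checking $f\circ\iota_J=f_J$; both rely in the same way on the hypothesis that $\mathcal{J}$ is directed under inclusion, so there is no real obstacle once that observation is isolated.
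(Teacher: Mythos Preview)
Your proof is correct and follows essentially the same route as the paper's own (sketched) argument: define the components $g_i$ by choosing some $J_i\ni i$ in $\mathcal{J}$, check well-definedness via directedness, produce $f$ from the universal property of the full coproduct, and then verify $f\circ\iota_J=f_J$ and uniqueness componentwise. You have in fact supplied the details that the paper leaves to the reader, but the strategy is identical.
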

\begin{proof} {\bf (Sketch)}
Let $(f_J:X_J\longrightarrow A)_{J\in\mathcal{J}}$ be a family of morphisms such that $f_{J}= f_J\circ \iota_{JJ'}$, whenever $J\subseteq J'$ and $J,J'\in\mathcal{J}$. For each $j\in I$, we choose a $J\in\mathcal{J}$ such that $j\in J$, which is possible since $\bigcup_{J\in\mathcal{J}}J=I$. We put $f_j=f_J\circ\iota_j^J$ and leave to the reader the easy verfication that the definition of $f_j$ does not depend on the choice of $J$. Now, by the universal property of the coproduct, we get a unique morphism $f:\coprod_{i\in I}X_i\longrightarrow A$ such that $f\circ\iota_i=f_i$, for all $i\in I$. For any $J\in\mathcal{J}$ and any $j\in J$, we then have that $f\circ\iota_J\circ\iota_j^J=f\circ\iota_j=f_j=f_J\circ\iota_j^J$. This implies that $f\circ\iota_J=f_J$, for all $J\in\mathcal{J}$, and the uniqueness of $f$ satisfying this property is left as an exercise.
\end{proof}

The following result follows via an adaptation of a famous argument of Lazard \cite[Th\'eor\`eme 1.2]{L}.

\begin{prop}[Generalized Lazard's Trick] \label{prop.Lazard}
Let $\mathcal{A}$ be an additive category with coproducts, let $[(X_i)_{i\in I},(u_{ij}:X_i\to X_j)_{i\leq j})]$ and  $[(Y_\lambda)_{\lambda\in\Lambda},(v_{\lambda\mu}:Y_\lambda\to Y_\mu)_{\lambda\leq\mu}]$ be two direct systems in $\mathcal{A}$ that have a direct limit and let $f:\varinjlim X_i\to\varinjlim Y_\lambda$ be any morphism satisfying the following properties:

$(\dagger)$ For each $j\in I$, there is a $\mu =\mu (j)\in\Lambda$ such that $f\circ u_j$ factors through $v_\mu$, where $u_j:X_j\to\varinjlim X_i$ and $v_\mu :Y_\mu\to\varinjlim Y_\lambda$ are the canonical maps  to the direct limit.

$(\dagger\dagger)$ $v_\mu:Y_\mu\longrightarrow\varinjlim Y_\lambda$ is a monomorphism, for all $\mu\in\Lambda$.

Then  there exists a directed set $\Omega$ and a direct system of morphisms $(g_\omega :X_\omega\to Y_\omega \mid \omega\in\Omega)$ satisfying the following properties:

\begin{enumerate}
\item[(a)] $X_\omega\in\{X_i\text{: }i\in I\}$ and $Y_\omega\in\{Y_\lambda\text{: }\lambda\in\Lambda\}$, for all $\omega\in\Omega$;
\item[(b)] the direct limit of the given direct system of morphisms exists and  $\varinjlim g_\omega :\varinjlim X_\omega\to\varinjlim Y_\omega$ is isomorphic to $f$. 
\end{enumerate}

Moreover, if $\alpha$ is a regular cardinal such that $I$ and $\Lambda$ are $\alpha$-directed, then $\Omega$ may be chosen to be also $\alpha$-directed.
\end{prop}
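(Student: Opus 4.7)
The plan is to encode the required directed system directly inside the compatibility data of $f$. Set
\[
\Omega := \{(j,\mu,g) : j\in I,\ \mu\in\Lambda,\ g\in\Hom_{\mathcal{A}}(X_j,Y_\mu),\ v_\mu\circ g=f\circ u_j\},
\]
and order it by $(j,\mu,g)\leq(j',\mu',g')$ iff $j\leq j'$ in $I$, $\mu\leq\mu'$ in $\Lambda$, and $v_{\mu\mu'}\circ g=g'\circ u_{jj'}$. For each $\omega=(j,\mu,g)\in\Omega$ I would put $X_\omega:=X_j$, $Y_\omega:=Y_\mu$ and $g_\omega:=g$; the defining relation $v_\mu\circ g=f\circ u_j$ is exactly the cocone compatibility that will identify $\varinjlim g_\omega$ with $f$.

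First I would verify that $(\Omega,\leq)$ is a directed poset, in fact $\alpha$-directed whenever $I$ and $\Lambda$ are. Given a family $\{(j_\ell,\mu_\ell,g_\ell)\}_{\ell\in L}$ with $|L|<\alpha$, I would pick $j\in I$ above all the $j_\ell$, invoke $(\dagger)$ at $j$ to obtain some $\nu\in\Lambda$ and $h\colon X_j\to Y_\nu$ with $v_\nu\circ h=f\circ u_j$, then pick $\mu\in\Lambda$ above $\nu$ and every $\mu_\ell$, and set $g:=v_{\nu\mu}\circ h$, so $(j,\mu,g)\in\Omega$. The compatibility with $(j_\ell,\mu_\ell,g_\ell)$ follows from
\[
v_\mu\circ(v_{\mu_\ell\mu}\circ g_\ell)=v_{\mu_\ell}\circ g_\ell=f\circ u_{j_\ell}=f\circ u_j\circ u_{j_\ell j}=v_\mu\circ(g\circ u_{j_\ell j}),
\]
together with $(\dagger\dagger)$: monicity of $v_\mu$ lets me cancel it on the left to get the defining square of $\leq$. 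This is the one place where hypothesis $(\dagger\dagger)$ is genuinely used, and I expect it to be the main conceptual obstacle in the argument; everything else is either the universal property of direct limits or cofinality bookkeeping.

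Next I would check that the two forgetful projections $\pi_1\colon\Omega\to I$ and $\pi_2\colon\Omega\to\Lambda$ are order preserving with cofinal image. For $\pi_1$, hypothesis $(\dagger)$ produces, for every $j\in I$, an element of $\Omega$ whose first coordinate is $j$. For $\pi_2$, starting from any $(j_0,\mu_0,g_0)\in\Omega$ and given $\mu\in\Lambda$, I would pick $\mu'$ above $\mu$ and $\mu_0$ and form $(j_0,\mu',v_{\mu_0\mu'}\circ g_0)\in\Omega$, which projects to $\mu'\geq\mu$. Cofinality of a monotone map between directed (resp. $\alpha$-directed) posets preserves existence of direct limits and yields canonical isomorphisms
\[
\varinjlim_{\omega\in\Omega}X_\omega\cong\varinjlim_{i\in I}X_i,\qquad\varinjlim_{\omega\in\Omega}Y_\omega\cong\varinjlim_{\lambda\in\Lambda}Y_\lambda.
\]

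Finally I would argue that under these identifications the induced map $\varinjlim g_\omega$ is $f$. This is immediate: the universal property of $\varinjlim X_\omega$ characterises $\varinjlim g_\omega$ by the rule that its precomposition with the structural map $X_\omega\to\varinjlim X_\omega$ equals $v_\mu\circ g_\omega$; but by the very definition of $\Omega$ that composite is $f\circ u_j$, which is precisely the precomposition of $f$ with the same structural map. Uniqueness in the universal property then gives $\varinjlim g_\omega=f$. The $\alpha$-directed refinement requires no extra work beyond the strengthening of the bound already handled in the directedness verification.
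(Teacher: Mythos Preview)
Your proof is correct and follows essentially the same route as the paper: the set $\Omega$ of triples $(j,\mu,g)$ with $v_\mu\circ g=f\circ u_j$, the partial order you define, the use of $(\dagger\dagger)$ to cancel $v_\mu$ in the $\alpha$-directedness verification, and the cofinality of the two projections are exactly what the paper does (the paper in fact refers to \cite[Proposition 9.16]{CCS} for everything except the $\alpha$-directed part, which it spells out just as you do). Your write-up is actually more self-contained than the paper's, since you also verify cofinality of $\pi_1,\pi_2$ and the identification $\varinjlim g_\omega=f$ explicitly.
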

\begin{proof}
Except for the final assertion concerning $\alpha$, the rest of the proof is identical to the one in \cite[Proposition 9.16]{CCS}, under condition (1) of that result. This latter condition has been replaced here by condition $(\dagger\dagger)$,  that makes the AB5 hypothesis used there unnnecessary. We borrow all the terminology from that mentioned proof. 

Let then assume that $I$ and $\Lambda$ are $\alpha$-directed. We choose exactly the same $\Omega$ of [op.cit], i.e. $\Omega$ consists of the triples $(i,\lambda,g)$ such that $(i,\lambda)\in I\times\Lambda$ and $g:X_i\longrightarrow Y_\lambda$ is a (unique) map such that $v_\lambda\circ g=f\circ u_i$. Let us consider any family [$(i_t,\lambda_t,g_t)]_{t\in T}$ of elements of $\Omega$, where $|T|<\alpha$. Due to the $\alpha$-directed condition of $I$ and $\Lambda$, we can choose $i\in I$ and $\lambda\in\Lambda$ such that $i_t\leq i$ and $\lambda_t\leq\lambda$, for all $t\in T$. By condition $(\dagger)$, there exists a $\mu=\mu (i)\in\Lambda$ together with a map $g:X_i\longrightarrow Y_\mu$ such that $v_\mu\circ g=f\circ u_i$, equivalently, such that $(i,\mu,g)\in\Omega$. Without loss of generality, we can assume that $\lambda\leq\mu$. Then, for each $t\in T$, we have an equality $$v_\mu\circ g\circ u_{i_ti}=f\circ u_i\circ u_{i_ti}=f\circ u_{i_t}=v_{\lambda_t}\circ g_t=v_\mu\circ v_{\lambda_t\mu}\circ g_t.$$ By condition $(\dagger\dagger)$, the map $v_\mu$ is a monomorphism, which implies that $g\circ u_{i_ti}=v_{\lambda_t\mu}\circ g$ and hence that $(i_t,\lambda_t,g_t)\preceq (i,\mu ,g)$. Therefore $\Omega$ is $\alpha$-directed.
\end{proof}

\begin{cor} \label{cor.morphism between coproducts as colimit}
Let  $f:\coprod_{j\in J}X'_j\longrightarrow\coprod_{i\in I}X_i$ be a morphism in $\mathcal{A}$, where $X'_j,X_i\in\text{Small}_\alpha (\mathcal{A})$ for all $j\in J$ and $i\in I$. For each set $S$, let us denote by $\mathcal{P}_\alpha (S)$ the set of subsets $S'\subseteq S$ such that $|S'|<\alpha$, that is an $\alpha$-directed set with the inclusion order. The following assertions hold:

\begin{enumerate}
\item With notation as in the first paragraph of this section, we have direct systems $(X'_\Lambda)_{\Lambda\in\mathcal{P}_\alpha (J)}$ and  $(X_\Upsilon)_{\Upsilon\in\mathcal{P}_\alpha (I)}$ in $\mathcal{A}$ such that $\varinjlim_{\Lambda\in\mathcal{P}_\alpha (J)}X'_\Lambda\cong\coprod_{j\in J}X'_j$ and $\varinjlim_{\Upsilon\in\mathcal{P}_\alpha (I)}X_\Upsilon\cong\coprod_{i\in I}X_i$.
\item There is an $\alpha$-directed system of morphisms $(f_\omega:\hat{X}'_\omega\longrightarrow\hat{X}_\omega )_{\omega\in\Omega}$ satisfying the following properties:

\begin{enumerate}
\item $\hat{X}'_\omega\in\{X'_\Lambda\text{: }\Lambda\in\mathcal{P}_\alpha (J)\}$ and $\hat{X}_\omega\in\{X_\Upsilon\text{: }\Upsilon\in\mathcal{P}_\alpha (I)\}$, for all $\omega\in\Omega$;
\item the direct limit of the given direct system of morphisms exists and $\varinjlim f_\omega :\varinjlim\hat{X}'_\omega\longrightarrow\varinjlim\hat{X}_\omega$ is isomorphic to $f$.
\end{enumerate}
\end{enumerate}
\end{cor}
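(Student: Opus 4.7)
For part (1), I would take $\mathcal{J} = \mathcal{P}_\alpha(J)$ (resp.\ $\mathcal{P}_\alpha(I)$) in Proposition \ref{prop.coproduct-as-directlimit}. Both of these are $\alpha$-directed (hence a fortiori directed) subsets of the corresponding power sets, since regularity of $\alpha$ implies that the union of $<\alpha$-many sets of cardinality $<\alpha$ still has cardinality $<\alpha$. They also contain all singletons, so $\bigcup_{\Lambda \in \mathcal{P}_\alpha(J)} \Lambda = J$ and similarly for $I$. Proposition \ref{prop.coproduct-as-directlimit} then yields $\varinjlim_{\Lambda \in \mathcal{P}_\alpha(J)} X'_\Lambda \cong \coprod_{j \in J} X'_j$ with canonical cocone the sections $\iota_\Lambda$, and the analogous statement for $(X_\Upsilon)_{\Upsilon \in \mathcal{P}_\alpha(I)}$.

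For part (2), the plan is to feed these two $\alpha$-directed systems together with the morphism $f$ into Proposition \ref{prop.Lazard}. Two conditions must be verified. Condition $(\dagger\dagger)$ is immediate: in the additive category $\mathcal{A}$, the canonical map $\iota_\Upsilon : X_\Upsilon \rightarrowtail \coprod_{i \in I} X_i$ is a split monomorphism, a retraction being given by the universal property of $X_\Upsilon = \coprod_{i \in \Upsilon} X_i$ using $\iota^\Upsilon_i$ on the summands indexed by $\Upsilon$ and the zero map on the remaining ones. For condition $(\dagger)$, fix $\Lambda \in \mathcal{P}_\alpha(J)$ and examine $f \circ \iota_\Lambda : X'_\Lambda \longrightarrow \coprod_{i \in I} X_i$. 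For each $\lambda \in \Lambda$, the composition $f \circ \iota_\lambda = f \circ \iota_\Lambda \circ \iota^\Lambda_\lambda$ has $\alpha$-small source $X'_\lambda$, so it factors through some $\iota_{\Upsilon_\lambda}$ with $\Upsilon_\lambda \in \mathcal{P}_\alpha(I)$. Regularity of $\alpha$, combined with $|\Lambda| < \alpha$, guarantees that $\Upsilon := \bigcup_{\lambda \in \Lambda} \Upsilon_\lambda$ lies in $\mathcal{P}_\alpha(I)$, and then the universal property of the coproduct $X'_\Lambda$ gives a morphism $X'_\Lambda \to X_\Upsilon$ through which $f \circ \iota_\Lambda$ factors, yielding $(\dagger)$.

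With both hypotheses verified, Proposition \ref{prop.Lazard} supplies an $\alpha$-directed set $\Omega$ (the $\alpha$-directedness coming from the final assertion of that proposition, applicable because both $\mathcal{P}_\alpha(J)$ and $\mathcal{P}_\alpha(I)$ are $\alpha$-directed) and a direct system of morphisms $(f_\omega : \hat{X}'_\omega \to \hat{X}_\omega)_{\omega \in \Omega}$ meeting requirements (a) and (b). The only point of substance in the argument is the verification of $(\dagger)$, and I do not foresee any real obstacle there: the whole corollary amounts to a clean combination of Propositions \ref{prop.coproduct-as-directlimit} and \ref{prop.Lazard}, with regularity of $\alpha$ used precisely where it is needed, namely to bound a union of $<\alpha$-many sets each of size $<\alpha$.
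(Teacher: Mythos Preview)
Your proof is correct and follows essentially the same route as the paper: apply Proposition~\ref{prop.coproduct-as-directlimit} for part (1), then verify conditions $(\dagger)$ and $(\dagger\dagger)$ of Proposition~\ref{prop.Lazard} for part (2). The only cosmetic difference is that for $(\dagger)$ the paper invokes directly that $X'_\Lambda$, being a coproduct of $<\alpha$ many $\alpha$-small objects, is itself $\alpha$-small (a fact recorded earlier in the paper), whereas you redo this inline by factoring each summand separately and taking the union of the resulting subsets; also, in your description of the retraction witnessing $(\dagger\dagger)$ you mean the universal property of the big coproduct $\coprod_{i\in I}X_i$ rather than of $X_\Upsilon$.
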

\begin{proof}
1) Bearing in mind that, for each set $S$, one has that $\bigcup_{S'\in\mathcal{P}_\alpha (S)}S'=S$, this assertion is a direct consequence of Proposition \ref{prop.coproduct-as-directlimit}.

\vspace*{0.3cm}

2) We view $f$ as a morphism $\varinjlim_{\Lambda\in\mathcal{P}_\alpha (J)}X'_\Lambda\longrightarrow\varinjlim_{\Upsilon\in\mathcal{P}_\alpha (I)}X_\Upsilon$ and check that conditions $(\dagger )$ and $(\dagger\dagger )$ of Proposition \ref{prop.Lazard} are satisfied, so that assertion 2 will be a consequence of that propositon. 

Note first that the canonical map $\iota_{\Upsilon'}:X_{\Upsilon'}=\coprod_{i\in\Upsilon'}X_i\longrightarrow\varinjlim_{\Upsilon\in\mathcal{P}_\alpha (I)}X_\Upsilon=\coprod_{i\in I}X_i$ is a section, whence a monomorphism, for all $\Upsilon'\in\mathcal{P}_\alpha (I)$. Therefore condition $(\dagger\dagger )$ holds.   It remains to check property $(\dagger)$. Indeed if $\iota'_{\Lambda'}:X'_{\Lambda'}=\coprod_{j\in\Lambda '}X'_j\longrightarrow\varinjlim_{\Lambda\in\mathcal{P}_\alpha (\Lambda)}X'_\Lambda\cong\coprod_{j\in J}X'_j$ denotes the canonical section, for each $\Lambda'\in\mathcal{P}_\alpha (J)$, then $f\circ\iota'_{\Lambda '}:X'_{\Lambda'}\longrightarrow\varinjlim_{\Upsilon\in\mathcal{P}_\alpha (I)}X_\Upsilon\cong\coprod_{i\in I}X_i$ is a morphism with $\alpha$-small domain. We can then select an $\Upsilon'\in\mathcal{P}_\alpha (I)$ such that $f\circ\iota'_{\Lambda'}$ factors through $\iota_{\Upsilon'}:\coprod_{i\in\Upsilon'}X_i=X_{\Upsilon'}\longrightarrow\varinjlim_{\Upsilon\in\mathcal{P}_\alpha (I)}X_\Upsilon\cong\coprod_{i\in I}X_i$.
\end{proof}

\section{The theorem} \label{sec.The theorem}

We start by stating the following straightforward consequence of \cite[Lemma 2.2, Theorem 2.3 and Proposition 2.5]{N2}.

\begin{prop} (\cite{N2}) \label{prop.Neeman}
Let $\mathcal{D}$ be a well-generated triangulated category and $\tau =(\mathcal{U},\mathcal{V})$ be a t-structure in $\mathcal{D}$ generated by a set of objects $\mathcal{X}$.
Let $\alpha$ be the  smallest of the regular cardinals $\beta$ such that $\mathcal{X}\subseteq\mathcal{D}^\beta$ and $\mathcal{D}$ is $\beta$-compactly generated. The following assertions hold:
\begin{enumerate}
\item Each morphism $Z\longrightarrow U$, where $U\in\mathcal{U}$ and $Z\in\mathcal{D}^\alpha$, factors through an object of $\mathcal{U}\cap\mathcal{D}^\alpha$
\item Each object of $\mathcal{U}$ is the Milnor colimit  of a sequence  $U_0\stackrel{f_1}{\longrightarrow}U_1\longrightarrow \cdots\stackrel{f_1}{\longrightarrow}U_n\longrightarrow...$ such that $U_0$ and all  $\text{cone}(f_n)$ are  coproducts of objects of $\mathcal{U}\cap\mathcal{D}^\alpha$.
\end{enumerate}
\end{prop}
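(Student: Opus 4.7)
My plan is to derive the proposition directly from Neeman's results in \cite{N2}. The setup is the one considered by Neeman: $\mathcal{D}$ is well-generated, $\mathcal{X}\subseteq\mathcal{D}^\alpha$ is a set of $\alpha$-compact objects, and $\mathcal{D}$ itself is $\alpha$-compactly generated. In this setting, Neeman provides an explicit inductive construction of the aisle $\mathcal{U}$ as the smallest subcategory of $\mathcal{D}$ containing $\mathcal{X}$ and closed under coproducts, positive shifts, and extensions, together with approximation lemmas for morphisms from $\alpha$-compact objects into this aisle. The crucial observation, which makes the translation to the statement above immediate, is that the building blocks of $\mathcal{U}$ produced by Neeman's construction --- namely coproducts of shifts $X[n]$ with $X\in\mathcal{X}$ and $n\geq 0$ --- have summands in $\mathcal{U}\cap\mathcal{D}^\alpha$, since $\mathcal{D}^\alpha$ is a thick subcategory of $\mathcal{D}$ closed under shifts that contains $\mathcal{X}$.

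For assertion (2), I would invoke Theorem 2.3 and Proposition 2.5 of \cite{N2} essentially verbatim. Neeman's inductive construction produces, for each $U\in\mathcal{U}$, a sequence $U_0\stackrel{f_1}{\longrightarrow}U_1\stackrel{f_2}{\longrightarrow}\cdots$ in which $U_0$ is a coproduct of objects of the form $X[n]$ with $X\in\mathcal{X}$ and $n\geq 0$ and each $\text{cone}(f_k)$ is again such a coproduct, whose Milnor colimit is $U$. Since $\mathcal{X}\subseteq\mathcal{D}^\alpha$ and $\mathcal{D}^\alpha$ is closed under shifts, each summand lies in $\mathcal{U}\cap\mathcal{D}^\alpha$, which is exactly the required form.

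For assertion (1), I would combine Lemma 2.2 of \cite{N2} with the Milnor colimit description of (2). Given $f\colon Z\to U$ with $Z\in\mathcal{D}^\alpha$, the $\alpha$-smallness of $Z$ (which is part of $\alpha$-compactness), applied to the defining triangle of the Milnor colimit $\coprod U_n\stackrel{1-f}{\longrightarrow}\coprod U_n\to U\stackrel{+}{\to}$, forces $f$ to factor through one of the terms $U_n$ of the tower. Then Lemma 2.2 of \cite{N2}, applied inductively on the $n$-fold extensions from which $U_n$ is built, shows that the composition $Z\to U_n$ factors through an object of $\mathcal{U}\cap\mathcal{D}^\alpha$ obtained by restricting each involved coproduct to a subcoproduct indexed by $<\alpha$ terms. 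Composing with the canonical map $U_n\to U$ produces the required factorization.

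The main obstacle is the cardinality bookkeeping: Neeman's notion of $\alpha$-compactness (his $\alpha$-perfect classes of $\alpha$-small objects) must be matched precisely with the hypothesis here. Because $\alpha$ is defined to be minimal such that both $\mathcal{X}\subseteq\mathcal{D}^\alpha$ and $\mathcal{D}$ is $\alpha$-compactly generated, the two hypotheses of Neeman's theorems (existence of a perfect set of $\alpha$-compact generators and membership of $\mathcal{X}$ in $\mathcal{D}^\alpha$) are simultaneously satisfied, and no further refinement of the cardinal is required; the rest is a direct transcription.
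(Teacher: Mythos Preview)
Your proposal is correct and aligns with the paper's own treatment: the paper does not give a proof at all, but simply declares the proposition to be ``a straightforward consequence of \cite[Lemma 2.2, Theorem 2.3 and Proposition 2.5]{N2}'', which are precisely the three results you invoke. Your write-up is thus a fleshed-out version of the paper's one-line citation; the only minor difference is that you route assertion (1) through the Milnor colimit description of (2), whereas Neeman's Lemma 2.2 already yields the factorization directly at each finite stage of the construction, so the detour through $\alpha$-smallness and the Milnor triangle is not strictly necessary.
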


We can now state and have all the ingredients to prove the main result of the paper, which is an extended version of Theorem A in the introduction.

\begin{teor} \label{teor.main result2}
Let $\mathcal{D}$ be a well-generated triangulated category, $\tau =(\mathcal{U},\mathcal{V})$ be a t-structure generated by a set of objects $\mathcal{X}$ and $\mathcal{H}$ be its heart. Suppose that   $\alpha$ is the smallest of the regular cardinals $\beta$ such that $\mathcal{X}\subseteq\mathcal{D}^\beta$ and $\mathcal{D}$ is $\beta$-compactly generated. Let $\mathcal{S}$ be a skeleton of $\mathcal{U}\cap\mathcal{D}^\alpha$. The following assertions hold:

\begin{enumerate}
\item  Each object of $\mathcal{H}$ is an $\alpha$-directed colimit of objects of $H(\mathcal{S})$, where $H:=H_\tau^0:\mathcal{D}\longrightarrow\mathcal{H}$ is the associated cohomological functor. 
\item $H(\mathcal{S})$ consists of $\alpha$-presentable objects and each $\alpha$-presentable object of $\mathcal{H}$ is isomorphic to one of them.
\end{enumerate}
In particular $\mathcal{H}$ is a locally $\alpha$-presentable abelian category. 
\end{teor}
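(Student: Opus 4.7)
My strategy is to show that $H(\mathcal{S})$ is a set of $\alpha$-presentable generators of $\mathcal{H}$, so that the ``in particular'' clause reduces to the characterization of locally $\alpha$-presentable abelian categories recalled in Subsection~``$\alpha$-small and $\alpha$-presentable objects''. The two numbered assertions correspond exactly to the generating/colimit property and to $\alpha$-presentability together with essential-surjectivity onto the $\alpha$-presentables. First I would verify that $\mathcal{H}$ is cocomplete: the aisle $\mathcal{U}$ is closed under coproducts in $\mathcal{D}$ (by Neeman's characterization of $\mathcal{U}_{\mathcal{X}}$ recalled just before Theorem~\ref{teor.main result2}), and $H$ restricted to $\mathcal{U}$ is left adjoint to the inclusion $\mathcal{H}\hookrightarrow\mathcal{U}$ (a standard consequence of the t-structure axioms). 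Hence $\mathcal{H}$ has coproducts via $\coprod_{\mathcal{H}}M_i = H(\coprod_{\mathcal{D}}M_i)$, and being an abelian category with coproducts it automatically has $\alpha$-directed colimits.

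\textbf{Proof of assertion 1.} For $M\in\mathcal{H}\subseteq\mathcal{U}$, apply Proposition~\ref{prop.Neeman}(2) to present $M$ as a Milnor colimit of a sequence $U_0\to U_1\to\cdots$ in $\mathcal{U}$ whose first term and all cones are coproducts of $\mathcal{S}$-objects. Applying $H$ to the Milnor triangle and using both that $H^{1}(U)=0$ for $U\in\mathcal{U}$ and that $H$ preserves coproducts of $\mathcal{U}$-objects, I would identify $M$ with the cokernel in $\mathcal{H}$ of $1-H(f)$ on $\coprod_{n}H(U_n)$, i.e.\ with the sequential colimit $M\cong\varinjlim_{\mathbb{N}}H(U_n)$. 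It then suffices to exhibit each $H(U_n)$ as an $\alpha$-directed colimit of $H(\mathcal{S})$-objects, which I would establish by induction on $n$. The base case $H(U_0)\cong\bigoplus_{i}H(S_i)$ follows immediately from Proposition~\ref{prop.coproduct-as-directlimit}, since the $<\alpha$-subcoproducts of the $H(S_i)$ lie in $H(\mathcal{S})$ by closure of $\mathcal{S}$ under $<\alpha$-coproducts. For the inductive step, apply $H$ to $U_{n-1}\to U_n\to\mathrm{cone}(f_n)\stackrel{+}{\to}$ and use the Generalized Lazard's Trick (Proposition~\ref{prop.Lazard}) to realize the connecting morphisms at the system level, writing $H(U_n)$ as an $\alpha$-directed colimit of $H(\mathcal{S})$-objects. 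Since an $\alpha$-directed colimit of $\alpha$-directed colimits is itself $\alpha$-directed, this yields the required presentation of $M$.

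\textbf{Proof of assertion 2.} For $S\in\mathcal{S}$ and any $\alpha$-directed system $(N_i)$ in $\mathcal{H}$ with $\mathcal{H}$-colimit $N$, one must show the canonical map $\varinjlim\text{Hom}_{\mathcal{H}}(H(S),N_i)\to\text{Hom}_{\mathcal{H}}(H(S),N)$ is an isomorphism. By the adjunction this is the analogous map for $\text{Hom}_{\mathcal{D}}(S,-)$; refining $(N_i)$ via assertion 1 into an $\alpha$-directed system of $H(\mathcal{S})$-objects, Proposition~\ref{prop.Neeman}(1) (every map from $\mathcal{D}^{\alpha}$ into $\mathcal{U}$ factors through an object of $\mathcal{U}\cap\mathcal{D}^{\alpha}$) combined with Proposition~\ref{prop.Lazard} yields both surjectivity and injectivity. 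For the essential-surjectivity part, an $\alpha$-presentable $M\in\mathcal{H}$ is, by assertion 1, an $\alpha$-directed colimit of $H(\mathcal{S})$-objects through which $\mathrm{id}_M$ must factor, so $M$ is a retract of some $H(S)$; idempotent completeness of the thick subcategory $\mathcal{U}\cap\mathcal{D}^{\alpha}$ lifts the splitting, giving $M\cong H(S')$ for some $S'\in\mathcal{S}$. Finally, since $\mathcal{S}$ is skeletally small, $H(\mathcal{S})$ is a set of $\alpha$-presentable generators of $\mathcal{H}$, and the cited characterization gives local $\alpha$-presentability.

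\textbf{Main obstacle.} The delicate point is comparing $\alpha$-directed colimits in $\mathcal{H}$ (which are not computed in $\mathcal{D}$) with $\mathcal{D}$-level factorizations of maps out of $\alpha$-small objects. Both the inductive step of assertion 1 and the injectivity half of assertion 2 hinge on the Generalized Lazard's Trick to turn morphisms between $\mathcal{H}$-colimits into morphisms between system objects without invoking any AB5-style hypothesis on $\mathcal{H}$; this is precisely the role of the preparatory Section~\ref{sec.coproducts-dirlimits}.
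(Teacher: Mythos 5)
There are genuine gaps in both halves of your argument, and they sit exactly where the paper has to work hardest. For assertion 1, your plan is to apply $H$ to the Milnor tower of Proposition \ref{prop.Neeman}(2) and then compose a sequential colimit with $\alpha$-directed ones; but the outer colimit $M\cong\varinjlim_{\mathbb{N}}H(U_n)$ is only countably indexed, so for $\alpha>\aleph_0$ it is not an $\alpha$-directed colimit, and the slogan ``an $\alpha$-directed colimit of $\alpha$-directed colimits is $\alpha$-directed'' does not apply. To rearrange, you would have to lift the connecting maps $H(U_{n-1})\to H(U_n)$ (and the extension step coming from the triangle $U_{n-1}\to U_n\to\mathrm{cone}(f_n)$, which only gives a right-exact sequence, not a colimit presentation) to maps of the presenting $\alpha$-directed systems. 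Proposition \ref{prop.Lazard} cannot do this inside $\mathcal{H}$: hypothesis $(\dagger\dagger)$ (the canonical maps into the colimit are monomorphisms) fails for a general direct system in a non-AB5 heart, and hypothesis $(\dagger)$ is a factorization property that essentially presupposes the $\alpha$-presentability of the $H(S)$ you only establish in assertion 2 — a circularity. The paper avoids all of this by first proving that $\text{Coprod}(\mathcal{S})$ is $t$-generating (Lemma \ref{lema.t-generating}), which lets it write any $M\in\mathcal{H}$ as $\text{Coker}(H(f))$ for a \emph{single} morphism $f$ between coproducts of objects of $\mathcal{S}$ (Lemma \ref{lema.objects of H as directed colimits}), and then applies Corollary \ref{cor.morphism between coproducts as colimit} to $f$ in $\mathcal{D}$, where the transition maps are split subcoproduct inclusions so $(\dagger\dagger)$ really holds; taking cones and using that $H_{|\mathcal{U}}$ is a left adjoint gives $M\cong\varinjlim H(Z_\omega)$ over an $\alpha$-directed poset.

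For assertion 2 the gap is more serious. Adjunction identifies $\text{Hom}_\mathcal{H}(H(S),N)$ with $\text{Hom}_\mathcal{D}(S,N)$, but the colimit $\varinjlim_\mathcal{H} N_i$ is computed by cokernels in $\mathcal{H}$ (i.e.\ involves coaisle truncations), not by coproducts or homotopy colimits in $\mathcal{D}$; the $\alpha$-compactness of $S$ and Proposition \ref{prop.Neeman}(1) control maps of $S$ into coproducts and into objects of $\mathcal{U}$, and give no factorization of a map $S\to\varinjlim_\mathcal{H} N_i$ through some $N_i$, nor the injectivity statement. ``Neeman(1) combined with Lazard'' is therefore not a proof but a restatement of the difficulty. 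The paper's actual argument for this point is the functor-category detour: the Serre quotient $F:\text{mod}\text{-}\mathcal{P}\to\mathcal{H}$ with fully faithful right adjoint $G$, the reduction via Proposition \ref{prop.preservation-alpha-presented} to showing that $G$ preserves $\alpha$-directed colimits, the identification $\text{Im}(G)=\mathcal{T}_0^{\perp_{0,1}}$ from Proposition \ref{prop.Serre quotient functor}, and the key Lemma \ref{lem.FP2-alpha} that the torsion class $\text{Ker}(F)$ is generated by objects with three-term projective presentations by $\alpha$-presentable representables, which is what makes $\mathcal{T}_0^{\perp_{0,1}}$ closed under $\alpha$-directed colimits. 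None of this is replaceable by the one-line argument you propose. Finally, in the essential-surjectivity step, ``idempotent completeness of $\mathcal{U}\cap\mathcal{D}^\alpha$ lifts the splitting'' does not work as stated: the idempotent lives on $H(S)$ in $\mathcal{H}$, and $\text{End}_\mathcal{H}(H(S))\cong\text{Hom}_\mathcal{D}(S,H(S))$ is not $\text{End}_\mathcal{D}(S)$, so it does not lift to an idempotent on $S$; the paper instead shows directly that $H(\mathcal{S})$ is closed under direct summands by adapting the argument of \cite[Theorem 8.31]{SS}.
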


\begin{rem} \label{rem.Saorin-Stovicek result}
When $\alpha=\aleph_0$ in last theorem, one gets  \cite[Theorem 8.31]{SS} since, as shown in the proof of this latter result, the heart of any compactly generated t-structure $\tau$ in a triangulated category with coproducts $\mathcal{D}$ is also the heart of a t-structure with the same aisle in a compactly generated subcategory of $\mathcal{D}$. 
\end{rem}

The proof of Theorem \ref{teor.main result2} will occupy the rest of this section and leans upon some auxiliary results. In the rest of the section we use the hypotheses and  notation of Theorem  \ref{teor.main result2}.

Recall (cf. \cite[Definition 4.2]{SS}) that a subcategory $\mathcal{P}$ of $\mathcal{U}$ is \emph{$t$-generating} when it is precovering and, for each $U\in\mathcal{U}$, there is a triangle $U'\longrightarrow P\stackrel{p}{\longrightarrow}U\stackrel{+}{\longrightarrow}$ such that $U'\in\mathcal{U}$ and $P\in\mathcal{P}$. Such a morphism $p$ will be called here a \emph{$\mathcal{P}$-morphism}. When $\mathcal{P}$ is $t$-generating,  any $\mathcal{P}$-precover of $U$ is a $\mathcal{P}$-morphism (see \cite[Lemma 4.4]{SS}).   Note also that, due to the octahedrom axiom, if $\tilde{U}'\longrightarrow U\stackrel{h}{\longrightarrow}\tilde{U}\stackrel{+}{\longrightarrow}$ is a triangle with its three vertices in $\mathcal{U}$, then $h\circ p$ is also a $\mathcal{P}$-morphism. On the other hand, when $\mathcal{P}$ is closed under coproducts,  any coproduct of $\mathcal{P}$-morphisms is a $\mathcal{P}$-morphism since coproducts of triangles are triangles (see \cite[Proposition 1.2.1 and Remark 1.2.2]{N}) and $\mathcal{U}$ is closed under coproducts.

\begin{lema} \label{lema.t-generating}
$\text{Coprod}(\mathcal{S})$ is a t-generating subcategory of $\mathcal{U}$
\end{lema}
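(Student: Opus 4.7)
The plan is, for each $U\in\mathcal{U}$, to take the canonical morphism $p_U:P_U\longrightarrow U$, where $P_U:=\coprod_{(S,f)}S$ is indexed by the (set-indexed) family of pairs $(S,f)$ with $S\in\mathcal{S}$ and $f\in\text{Hom}_\mathcal{D}(S,U)$, and where $p_U$ restricts to $f$ on the $(S,f)$-th summand. Then $P_U\in\text{Coprod}(\mathcal{S})$, and the containment $\text{Coprod}(\mathcal{S})\subseteq\mathcal{U}$ is immediate from $\mathcal{S}\subseteq\mathcal{U}$ together with the closure of $\mathcal{U}$ under coproducts. By the universal property of the coproduct, every morphism $Q\longrightarrow U$ with $Q=\coprod_j S_j\in\text{Coprod}(\mathcal{S})$ factors through $p_U$, so $p_U$ is a $\text{Coprod}(\mathcal{S})$-precover and the precovering half of the $t$-generating condition holds automatically.

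The essential point is to show that the cocone $U'$ in the triangle $U'\longrightarrow P_U\stackrel{p_U}{\longrightarrow}U\stackrel{+}{\longrightarrow}$ lies in $\mathcal{U}$. Applying the cohomological functor $H:=H_\tau^0$ to this triangle and using $P_U,U\in\mathcal{U}$ (so $H^i(P_U)=H^i(U)=0$ for $i\geq 1$), one sees that $H^i(U')=0$ for $i\geq 2$ automatically and that $U'\in\mathcal{U}$ is equivalent to $H(p_U):H(P_U)\longrightarrow H(U)$ being an epimorphism in $\mathcal{H}$. Through the natural adjunction $\text{Hom}_\mathcal{D}(U,M)\cong\text{Hom}_\mathcal{H}(H(U),M)$ valid for $M\in\mathcal{H}$, this reduces to showing that any $\tilde{\phi}:U\longrightarrow M$ in $\mathcal{D}$ with $M\in\mathcal{H}$ and $\tilde{\phi}\circ p_U=0$ must vanish. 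The assumption amounts to $\tilde{\phi}\circ f=0$ for every $(S,f)$, and by Proposition \ref{prop.Neeman}(1) every morphism $h:Z\longrightarrow U$ with $Z\in\mathcal{D}^\alpha$ factors through some object of $\mathcal{S}$, so $\tilde{\phi}\circ h=0$ for all such $h$.

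To extract $\tilde{\phi}=0$ I would then invoke Proposition \ref{prop.Neeman}(2), writing $U=\text{Mcolim}(U_n)$ with $U_0$ and every $C_{n+1}:=\text{cone}(f_{n+1})$ in $\text{Coprod}(\mathcal{S})$, and prove by induction on $n$ that $\tilde{\phi}\circ q_n=0$ for the Milnor structure morphisms $q_n:U_n\longrightarrow U$. The base case $n=0$ is immediate since $U_0$ is a coproduct of objects of $\mathcal{S}$. For the inductive step, the triangle $U_n\stackrel{f_{n+1}}{\longrightarrow}U_{n+1}\stackrel{\pi_{n+1}}{\longrightarrow}C_{n+1}\stackrel{\delta_{n+1}}{\longrightarrow}U_n[1]$ together with the vanishing $\text{Hom}_\mathcal{D}(U_n[1],M)=\text{Hom}_\mathcal{D}(U_n,M[-1])=0$ (which holds since $U_n\in\mathcal{U}$ and $M[-1]\in\mathcal{V}[-1]=\mathcal{U}^{\perp_0}$) yields a unique factorization $\tilde{\phi}\circ q_{n+1}=\psi\circ\pi_{n+1}$, and the task reduces to showing that every component $\psi_k:S_k^{(n+1)}\longrightarrow M$ of $\psi$ (where $C_{n+1}=\coprod_k S_k^{(n+1)}$ with $S_k^{(n+1)}\in\mathcal{S}$) vanishes. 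Once the induction closes, the Milnor triangle combined with the analogous vanishing $\text{Hom}_\mathcal{D}(\coprod_n U_n[1],M)=0$ forces $\tilde{\phi}=0$.

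The main obstacle is precisely this inductive step. If the canonical inclusion $\iota_k:S_k^{(n+1)}\hookrightarrow C_{n+1}$ lifted to some $\ell_k:S_k^{(n+1)}\longrightarrow U_{n+1}$, then $\psi_k=\tilde{\phi}\circ q_{n+1}\circ\ell_k=0$ because $q_{n+1}\circ\ell_k$ is a morphism from $\mathcal{S}$ to $U$; but the obstruction $\delta_{n+1}\circ\iota_k\in\text{Hom}_\mathcal{D}(S_k^{(n+1)},U_n[1])$ to such a lift is generally nonzero. Fortunately, from $q_n=q_{n+1}\circ f_{n+1}$ one derives the identity $q_n[1]\circ\delta_{n+1}=0$, so $\delta_{n+1}\circ\iota_k$ becomes trivial after post-composition with $q_n[1]:U_n[1]\longrightarrow U[1]$; combining this with Proposition \ref{prop.Neeman}(1) applied to $U_n[1]\in\mathcal{U}$, I expect to be able to reroute the obstruction through an object of $\mathcal{S}$ that is already controlled by the inductive hypothesis, thereby closing the argument.
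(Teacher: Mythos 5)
Your reductions are sound, and in fact your overall strategy can be made to work, but as written the proof has a genuine gap exactly where the whole difficulty of the lemma is concentrated. The preliminary steps are fine: for the canonical precover $p_U:P_U\to U$ the cocone $U'$ lies in $\mathcal{U}[-1]$ automatically (it is an extension of $P_U$ by $U[-1]$), and it is this containment -- not the vanishing of the $H^i(U')$, which for a possibly degenerate t-structure would prove nothing by itself -- that makes ``$U'\in\mathcal{U}$'' equivalent to ``$H(p_U)$ is an epimorphism''; via the adjunction and Proposition \ref{prop.Neeman}(1) everything then reduces, as you say, to showing that a morphism $\tilde{\phi}:U\to M$ with $M\in\mathcal{H}$ which kills all maps from $\alpha$-compact objects is zero, and the base case and the final Milnor-triangle step are correct. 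The problem is the inductive step, which you only conjecture. Moreover, the mechanism you sketch does not close it: applying Proposition \ref{prop.Neeman}(1) to $\delta_{n+1}\circ\iota_k:S_k\to U_n[1]$ gives a factorization through some $W\in\mathcal{U}\cap\mathcal{D}^\alpha$, and if you form the cocone $G$ of $S_k\to W$ and map it to $U_{n+1}$ by TR3 you do obtain $\psi_k\circ\rho=0$ for $\rho:G\to S_k$; but the long exact sequence only says $\text{Ker}(\rho^*)=\text{Im}\bigl(\text{Hom}_\mathcal{D}(W,M)\to\text{Hom}_\mathcal{D}(S_k,M)\bigr)$, and $\text{Hom}_\mathcal{D}(W,M)$ is in general nonzero (take $W\in\mathcal{H}\cap\mathcal{D}^\alpha$), so you only learn that $\psi_k$ factors through $W$. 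The identity $q_n[1]\circ\delta_{n+1}=0$ does not repair this.

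The step can be closed, but with a crucial twist: apply Proposition \ref{prop.Neeman}(1) to the desuspended obstruction $(\delta_{n+1}\circ\iota_k)[-1]:S_k[-1]\to U_n$, getting $a':S_k[-1]\to W'$ and $b':W'\to U_n$ with $W'\in\mathcal{U}\cap\mathcal{D}^\alpha$ and $\delta_{n+1}\circ\iota_k=b'[1]\circ a'[1]$, so the obstruction now factors through an object of $(\mathcal{U}\cap\mathcal{D}^\alpha)[1]$. Taking the triangle $W'\to G_k\stackrel{\rho}{\to}S_k\stackrel{a'[1]}{\to}W'[1]$ and filling in $g_k:G_k\to U_{n+1}$ with $\pi_{n+1}\circ g_k=\iota_k\circ\rho$ by TR3, one has $G_k\in\mathcal{D}^\alpha$ (thickness), hence $\tilde{\phi}\circ q_{n+1}\circ g_k=0$ and so $\psi_k\circ\rho=0$; and this time $\rho^*$ is injective because $\text{Hom}_\mathcal{D}(W'[1],M)=0$ as $W'[1]\in\mathcal{U}[1]$ and $M\in\mathcal{V}$. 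Hence $\psi_k=0$ and the induction closes (your relation $q_n[1]\circ\delta_{n+1}=0$ is not needed). Note that even with this repair your route is genuinely different from the paper's: the paper never argues with such phantom-type maps, but constructs an explicit $\mathcal{P}$-morphism by reducing, via homotopy pullbacks, to a $1$-fold extension, writing the gluing morphism between coproducts as an $\alpha$-directed colimit of morphisms between subcoproducts with $<\alpha$ terms (Corollary \ref{cor.morphism between coproducts as colimit}) and assembling the precover through the Verdier $3\times 3$ lemma; as submitted, however, your decisive step is missing.
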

\begin{proof}
We put $\mathcal{P}:=\text{Coprod}(\mathcal{S})$, which  is clearly  precovering and closed under coproducts.  Consider, for any object $U\in\mathcal{U}$, a sequence $U_0\stackrel{f_1}{\longrightarrow}U_1\longrightarrow...\stackrel{f_1}{\longrightarrow}U_n\longrightarrow...$  giving rise to a 
Milnor triangle $$\coprod_{n\in\mathbb{N}}U_n\stackrel{1-\sigma}{\longrightarrow}\coprod_{n\in\mathbb{N}}U_n\longrightarrow U\stackrel{+}{\longrightarrow}$$  as in Proposition \ref{prop.Neeman}.  Suppose that each $U_n$ admits $\mathcal{P}$-morphism $p_n:\hat{S}_n\rightarrow U_n$ for each $n\in\mathcal{N}$. Then $\coprod p_n: \coprod_{n\in\mathbb{N}}\hat{S}_n\longrightarrow\coprod_{n\in\mathbb{N}}U_n$ is a $\mathcal{P}$-morphism. The composition  $$\coprod_{n\in\mathbb{N}}\hat{S}_n\stackrel{\coprod p_n}{\longrightarrow}\coprod_{n\in\mathbb{N}}U_n\stackrel{\text{canonical}}{\longrightarrow}\text{Mcolim}U_n=U$$ is also a $\mathcal{P}$-morphism by the comments preceding this lemma. The proof  is hence reduced to prove our following claim:

\hspace*{1cm} {\it Claim}: For each $n\in\mathbb{N}$ and $U$ a $n$-fold extension of objects of $\mathcal{P}$, there is a $\mathcal{P}$-morphism  $\coprod_{i\in I}S_i\longrightarrow U$

The proof of the claim goes by induction on $n$, the case $n=0$ being clear. Suppose that $n>0$ and consider a triangle $\coprod_{i\in I}S_i\longrightarrow U\stackrel{g}{\longrightarrow} \tilde{U}\stackrel{+}{\longrightarrow},$ where $\tilde{U}$ is a $(n-1)$-fold extension of objects in $\mathcal{P}$ and $S_i\in\mathcal{S}$, for all $i\in I$. By the induction hypothesis, we have another triangle $\tilde{U}'\longrightarrow\coprod_{j\in J}S'_j\stackrel{h}{\longrightarrow}\tilde{U}\stackrel{+}{\longrightarrow}$, where $\tilde{U}'\in\mathcal{U}$ and $S'_j\in\mathcal{S}$ for all $j\in J$.  Taking the homotopy pullback of $g$ and $h$ and drawing the corresponding commutative diagram, we get two triangles $$\tilde{U}'\longrightarrow\hat{U}\stackrel{p}{\longrightarrow} U\stackrel{+}{\longrightarrow}$$ $$\coprod_{i\in I}S_i\longrightarrow\hat{U}\longrightarrow\coprod_{j\in J}S'_j\stackrel{+}{\longrightarrow}$$ Suppose that we are able to prove the existence of a  $\mathcal{P}$-morphism $\coprod_{\lambda\in\Lambda}S''_\lambda\stackrel{q}{\longrightarrow}\hat{U}\stackrel{+}{\longrightarrow}$, where  $S''_\lambda\in\mathcal{S}$, for all $\lambda\in\Lambda$. The comments preceding this lemma tell us that $\coprod_{\lambda\in\Lambda}S''_\lambda\stackrel{p\circ q}{\longrightarrow}U$ is a $\mathcal{P}$-morphism.

The consequence of last paragraph is that, after replacing $U$ by $\hat{U}$ if necessary,  it is enough to prove our claim for the case $n=1$. So in the rest of the proof we assume that $U$ is a ($1-$fold) extension of objects in $\text{Coprod}(\mathcal{S})$ so that, after shift, we get a triangle $$\coprod_{j\in J}S'_j[-1]\stackrel{f}{\longrightarrow}\coprod_{i\in I}S_i\longrightarrow U\stackrel{+}{\longrightarrow},$$ where $S'_j,S_i\in\mathcal{S}$ for all $i\in I$ and $j\in J$. Applying now Corollary \ref{cor.morphism between coproducts as colimit}, we conclude that there is an $\alpha$-directed system of morphisms $(f_w:X'_w\longrightarrow X_w)_{w\in\Omega}$ such that its direct limit exists in $\mathcal{D}$, $f$ is isomorphic to $\varinjlim f_w:\varinjlim X'_w\longrightarrow\varinjlim X_w$ and the $X'_w$ and the $X_w$ are subcoproducts with $<\alpha$ terms of $\coprod_{j\in J}S'_j[-1]$ and $\coprod_{i\in I}S_i$, respectively. In particular, we have that $X'_w,X_w\in\mathcal{D}^\alpha$ and, since $\mathcal{D}^\alpha$ is closed under extensions, we also get that $U_w:=cone(f_w)\in\mathcal{U}\cap\mathcal{D}^\alpha$, for all $w\in\Omega$. 
Moreover, we have a commutative diagram with triangles in rows:

$$\xymatrix{\coprod_{w\in\Omega}X'_w \ar[rr]^{\hspace{0,5 cm}\coprod f_{w}} \ar[d]_{p'} && \coprod_{w\in\Omega}X_{w} \ar[r] \ar[d]^{p} & \coprod_{w\in\Omega}U_{w} \ar[r]^{\hspace{0,5 cm}+} & \\ \varinjlim X'_w \ar[rr]^{\varinjlim f_w} && \varinjlim X_w \ar[r] & U \ar[r]^{+} & }$$

where $p'$ and $p$ are the canonical epimorphisms (=retractions in our case) from the coproduct to the direct limit.
In particular, we have that  $\text{cocone}(p')\in\text{Add}(\mathcal{S})[-1]$ and $\text{cocone}(p)\in\text{Add}(\mathcal{S})$. Applying the dual of Verdier's $3\times 3$ Axiom (see \cite[Lemma 2.6]{May} ), we can complete the last diagram with dotted arrows so that all rows and columns of the diagram are triangles:

$$\xymatrix{\text{cocone}(p') \ar@{-->}[rr] \ar@{-->}[d]&& \text{cocone}(p) \ar@{-->}[r] \ar@{-->}[d] & \text{cocone}(p'') \ar@{-->}[d] \ar[r]^{\hspace{0,5 cm}
+}& \\ \coprod_{w\in \Omega}X'_w \ar[rr]^{\coprod f_w} \ar[d]^{p'} && \coprod_{w\in \Omega}X_w \ar[r] \ar[d]^{p} & \coprod_{w\in \Omega}U_{w} \ar[r]^{\hspace{1 cm}+} \ar@{-->}[d]^{p''} & \\ \varinjlim X'_w\ar[rr]^{\varinjlim f_w} \ar[d]^{+} && \varinjlim X_w \ar[r] \ar[d]^{+} & U \ar[r]^{+} \ar[d]^{+} & \\ &&&&
}$$

From the top horizontal arrow and the fact that $\text{cocone}(p')[1],\text{cocone}(p)\in\text{Add}(\mathcal{S})$ we immediately get that $\text{cocone}(p'')\in\mathcal{U}$.  Our $\mathcal{P}$-morphism for $U$ is the $p''$ in  the right vertical column of the diagram. Indeed, for each $w\in\Omega$, we can take the unique object $\tilde{S}_w\in\mathcal{S}$ that is isomorphic to $U_w$, and  get a triangle as desired $$\text{cocone}(p'')\longrightarrow\coprod_{w\in\Omega}\tilde{S}_{w}\longrightarrow U\stackrel{+}{\longrightarrow}.$$ 

\end{proof}

\begin{lema} \label{lema.objects of H as directed colimits}
The following assertions hold, for each object $M$ of $\mathcal{H}$:

\begin{enumerate}
\item $M\cong\text{Coker}(H(f))$, where the cokernel is taken in $\mathcal{H}$,  for some morphism $f$ in $\text{Coprod}(\mathcal{S})$.
\item There exists an $\alpha$-directed set $\Omega$, a family $(g_{\nu\omega}:Z_\nu\longrightarrow Z_\omega)_{\nu\leq\omega}$ of morphisms in $\mathcal{S}$, where $\nu,\omega\in\Omega$,  such that $[(H(Z_\nu)_{\nu\in\Omega},H((g_{\nu\omega}):H(Z_\nu)\longrightarrow H(Z_\omega))_{\nu\leq\omega}]$ is an $\Omega$-direct sytem in $\mathcal{H}$ and $M$ is isomorphic to its direct limit. 
\end{enumerate}
\end{lema}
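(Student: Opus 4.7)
The plan is to establish assertion 1 by two applications of the $t$-generating property from Lemma~\ref{lema.t-generating}, and to derive assertion 2 by combining assertion 1 with the Lazard-style presentation given by Corollary~\ref{cor.morphism between coproducts as colimit}. Throughout I will write $H := H_\tau^0$ and use that $H^i(\mathcal{U}) = 0$ for $i \geq 1$ and $H(M) \cong M$.

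For assertion 1, since $M \in \mathcal{H} \subseteq \mathcal{U}$, Lemma~\ref{lema.t-generating} gives a triangle $U_1 \to P \to M \stackrel{+}{\to}$ with $U_1 \in \mathcal{U}$ and $P \in \text{Coprod}(\mathcal{S})$; applying the same property to $U_1$ yields a triangle $U_2 \to P' \to U_1 \stackrel{+}{\to}$ with $U_2 \in \mathcal{U}$ and $P' \in \text{Coprod}(\mathcal{S})$. Applying $H$ to both triangles, I obtain that $H(U_1) \to H(P) \to M \to 0$ is exact in $\mathcal{H}$ and that $H(P') \to H(U_1)$ is epic. Taking $f : P' \to P$ to be the composition $P' \to U_1 \to P$, which is clearly a morphism in $\text{Coprod}(\mathcal{S})$, functoriality of $H$ gives $M \cong \text{Coker}(H(U_1 \to P)) = \text{Coker}(H(f))$.

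For assertion 2, I apply Corollary~\ref{cor.morphism between coproducts as colimit} to this $f$, obtaining an $\alpha$-directed set $\Omega$ and an $\alpha$-directed system of morphisms $(f_\omega : X'_\omega \to X_\omega)_{\omega \in \Omega}$ whose colimit in $\mathcal{D}$ is isomorphic to $f$, where the $X'_\omega, X_\omega$ are subcoproducts with fewer than $\alpha$ summands of $P', P$ and the transition maps are canonical inclusions. Since $\mathcal{D}^\alpha$ is thick, $\mathcal{U}$ is closed under extensions, and $X'_\omega, X_\omega \in \mathcal{U} \cap \mathcal{D}^\alpha$, the cones $Z_\omega := \text{cone}(f_\omega)$ also lie in $\mathcal{U} \cap \mathcal{D}^\alpha$, so up to isomorphism I may take $Z_\omega \in \mathcal{S}$. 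For each $\nu \leq \omega$ I choose, via TR3, any morphism $g_{\nu\omega} : Z_\nu \to Z_\omega$ extending the given commutative square of subcoproduct inclusions to a morphism of triangles. Applying $H$ to the triangle $X'_\omega \to X_\omega \to Z_\omega \to X'_\omega[1]$ yields $H(Z_\omega) \cong \text{Coker}(H(f_\omega))$, and since cokernels and $\alpha$-directed colimits are both colimits in the cocomplete abelian category $\mathcal{H}$ (noting that $H$ preserves coproducts because the coaisle of a set-generated t-structure in a well-generated triangulated category is closed under coproducts, cf.\ \cite{N2}), I deduce $\varinjlim H(Z_\omega) \cong \text{Coker}(\varinjlim H(f_\omega))$. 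The identifications $\varinjlim H(X'_\omega) \cong H(P')$ and $\varinjlim H(X_\omega) \cong H(P)$ (with induced morphism $H(f)$) then follow from Proposition~\ref{prop.coproduct-as-directlimit} applied inside $\mathcal{H}$, once one checks that the projections $\Omega \to \mathcal{P}_\alpha(J)$ and $\Omega \to \mathcal{P}_\alpha(I)$ are cofinal (which I will verify by noting that triples of the form $(\Lambda, \Upsilon, 0)$ with $\Lambda = \emptyset$, or with $\Upsilon$ enlarged, always lie in $\Omega$). Hence $\varinjlim H(Z_\omega) \cong \text{Coker}(H(f)) \cong M$.

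The main obstacle will be the non-functoriality of the cone construction: the chosen $g_{\nu\omega}$ need not satisfy composition or identity laws in $\mathcal{D}$. The resolution lies entirely on the $\mathcal{H}$-side: by the universal property of cokernels, each $H(g_{\nu\omega})$ must coincide with the unique morphism between $\text{Coker}(H(f_\nu)) \cong H(Z_\nu)$ and $\text{Coker}(H(f_\omega)) \cong H(Z_\omega)$ induced by the morphism of cokernel diagrams. Uniqueness then forces $H(g_{\lambda\omega}) \circ H(g_{\nu\lambda}) = H(g_{\nu\omega})$ and $H(g_{\nu\nu}) = \text{id}$, so $(H(Z_\nu), H(g_{\nu\omega}))$ becomes a genuine $\Omega$-direct system in $\mathcal{H}$ with direct limit $M$, as required.
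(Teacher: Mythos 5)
Your overall strategy is the same as the paper's: assertion 1 via the $t$-generating property of $\text{Coprod}(\mathcal{S})$ (Lemma \ref{lema.t-generating}), assertion 2 via Corollary \ref{cor.morphism between coproducts as colimit}, taking cones $Z_\omega\in\mathcal{S}$, choosing the $g_{\nu\omega}$ by TR3, and curing the non-functoriality of cones by the universal property of cokernels in $\mathcal{H}$ — that last point is exactly the paper's resolution. Your proof of assertion 1, composing two $\mathcal{P}$-morphisms $P'\to U_1\to P$ and using that $H$ vanishes on $\mathcal{U}[1]$ and that a precomposition with an epimorphism does not change the cokernel, is correct and in fact a mild simplification of the paper's homotopy-pullback construction.

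There is, however, one genuinely faulty step: the justification of the key commutation of $H$ with the colimits involved. You assert that $H$ preserves coproducts ``because the coaisle of a set-generated t-structure in a well-generated triangulated category is closed under coproducts, cf.\ \cite{N2}''. Closure of the coaisle under coproducts is precisely the smashing condition; it is not proved in \cite{N2} and it fails in general for set-generated t-structures when $\alpha>\aleph_0$ (it is automatic only in the compactly generated case). Since your identifications $\varinjlim H(X'_\omega)\cong H(P')$, $\varinjlim H(X_\omega)\cong H(P)$, and the claim that the induced morphism is $H(f)$, all rest on $H$ commuting with these colimits, the step is unjustified as written. The fact you need is nevertheless true, for the reason the paper uses: $H_{|\mathcal{U}}:\mathcal{U}\longrightarrow\mathcal{H}$ is left adjoint to the inclusion $\mathcal{H}\hookrightarrow\mathcal{U}$ (\cite[Lemma 3.1]{PS1}), hence preserves all colimits computed in $\mathcal{U}$; in particular $H(f)=H(\varinjlim f_\omega)\cong\varinjlim H(f_\omega)$ directly, which also makes your cofinality detour unnecessary (and note that cofinality of the projection $\Omega\to\mathcal{P}_\alpha(J)$ would require the $\alpha$-smallness factorization argument, not triples with $\Lambda=\emptyset$). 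With that substitution your argument is complete and coincides with the paper's proof.
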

\begin{proof}
We again put $\mathcal{P}:=\text{Coprod}(\mathcal{S})$ all through the proof. 

(1) Using Lemma \ref{lema.t-generating},  choose a $\mathcal{P}$-morphism $p:\coprod_{i\in I}S_i\longrightarrow M$, thus leading to a triangle $M[-1]\stackrel{v}{\longrightarrow}U\stackrel{u}{\longrightarrow} \coprod_{i\in I}S_i \stackrel{p}{\longrightarrow}M$, with $U\in\mathcal{U}$. 
Next we take a $\mathcal{P}$-morphism $q:\coprod_{j\in J}S'_j\longrightarrow U$. By taking the homotopy pullback of $q$ and $v$,  we get the following commutative diagram, where columns 1, 2, 4 and rows 2, 3 are triangles and $U',\tilde{U}\in\mathcal{U}$: 

$$\xymatrix{U' \ar@{=}[r] \ar[d] & U' \ar[d] && U'[1] \ar[d]\\ \tilde{U}[-1] \ar[r] \ar[d]& \coprod_{j\in J} S'_j \ar[r]^{f} \ar[d]^{q}& \coprod_{i \in I} S_i \ar@{=}[d] \ar[r] & \tilde{U} \ar[d]\\ M[-1] \ar[d]^{+} \ar[r]_v & U \ar[d]^{+} \ar[r]_u & \coprod_{i \in I} S_i \ar[r]_p & M \ar[d]^{+} \\  & & & }$$

Bearing in mind that $H$ vanishes on $\mathcal{U}[1]$, application of this functor to the right vertical column yields an isomorphism $H(\tilde{U})\cong H(M)=M$. Application to the central row yields an exact sequence $H(\coprod_{j\in J}S'_j)\stackrel{H(f)}{\longrightarrow}H(\coprod_{i\in I}S_i)\longrightarrow H(\tilde{U})\rightarrow 0$.

\vspace*{0.3cm}

(2) Note first  that if $h:S\longrightarrow S'$ is a morphism in $\mathcal{S}$, then $\text{cone}(h)\in\mathcal{U}\cap\mathcal{D}^\alpha$. As a consequence, up to replacement by an isomorphic object, we can assume that $\text{cone}(f)\in\mathcal{S}$. 

Using assertion 1, choose a morphism $f:\coprod_{j\in J}S'_j\longrightarrow\coprod_{i\in I}S_i$ in $\mathcal{P}$ such that $\text{Coker}(H(f))\cong M$. By  Corollary \ref{cor.morphism between coproducts as colimit}, we have an $\alpha$-directed system of morphisms $(f_\omega:\hat{X}'_\omega\longrightarrow\hat{X}_\omega)_{\omega\in\Omega}$ satisfying condition 2 of that corollary, where the direct limits  exist in $\mathcal{D}$. Note that $\hat{X}'_\omega$ and $\hat{X}_\omega$ are coproducts of less than $\alpha$ objects of $\mathcal{S}$, and hence, up to isomorphism, all of them are objects of $\mathcal{\mathcal{S}}$. In particular, as mentioned above, we can assume  that $Z_\omega:=\text{cone}(f_\omega )\in\mathcal{S}$ for each $\omega\in\Omega$. 

 Since  $(f_\omega:\hat{X}'_\omega\longrightarrow\hat{X}_\omega)_{\omega\in\Omega}$ is a direct system of morphisms in $\mathcal{U}$ and $H_{| \mathcal{U}}:\mathcal{U}\longrightarrow\mathcal{H}$ is a left adjoint functor (see \cite[Lemma 3.1]{PS1}), we conclude that there is an exact sequence in $\mathcal{H}$  $$\varinjlim H(\hat{X}'_\omega)\stackrel{\varinjlim H(f_\omega)}{\longrightarrow}\varinjlim H(\hat{X}_\omega)\longrightarrow M\rightarrow 0$$ since $H(f)=H(\varinjlim f_\omega)\cong\varinjlim H(f_\omega)$. But this gives that $M\cong\varinjlim\text{Coker}_\mathcal{H}(H(f_\omega))$ since direct limits are right exact in any cocomplete abelian category. On the other hand, for each $\omega\in\Omega$, we have an exact sequence $H(\hat{X}'_\omega)\stackrel{H(f_\omega)}{\longrightarrow}H(\hat{X}_\omega)\longrightarrow H(Z_\omega)\rightarrow 0$, so that $M\cong\varinjlim  H(Z_\omega)$ and the first paragraph of this step says that, up to isomorphism, $Z_\omega\in\mathcal{S}$. The desired family of morphisms in $\mathcal{S}$ is given as follows. For each $\nu,\omega\in\Omega$ such that $\nu\leq\omega$, let $g_{\nu\omega}:Z_\nu\longrightarrow Z_\omega$ be a fixed choice of a morphism giving rise to a morphism of triangles as follows

$$\xymatrix{\hat{X}_\nu' \ar[r]^{f_{\nu}} \ar[d] & \hat{X}_{\nu} \ar[r] \ar[d] & Z_{\nu} \ar[r]^{+} \ar@{-->}[d]^{g_{\nu \omega}} & \\ \hat{X}'_{\omega} \ar[r]^{f_{\omega}} & \hat{X}_{\omega} \ar[r]& Z_{\omega} \ar[r]^{+} & }$$

where the two left most vertical arrows are the maps in the direct systems $(\hat{X}'_\omega)_{\omega\in\Omega}$ and  $(\hat{X}_\omega)_{\omega\in\Omega}$, respectively. 
Note that, by the universal property of cokernels,  $[(H(Z_\nu)_{\nu\in\Omega},H((g_{\nu\omega}):H(Z_\nu)\longrightarrow H(Z_\omega))_{\nu\leq\omega}]$ is an $\Omega$-direct sytem in $\mathcal{H}$ whose direct limit is isomorphic to $M$. 
\end{proof}

Recall from Proposition \ref{prop.C-modules} that the category $\text{mod}-\mathcal{P}$ is abelian with coproducts  and $\{(-,S)\text{: }S\in\mathcal{S}\}$ is a set of projective $\alpha$-presentable generators. Moreover, by  Lemma \ref{lema.t-generating} and \cite[Theorem 4.5]{SS},  we have a Serre quotient functor $F:\text{mod}-\mathcal{P}\longrightarrow\mathcal{H}$ with a fully faithful right adjoint $G:\mathcal{H}\longrightarrow\text{mod}-\mathcal{P}$ that takes $M\rightsquigarrow\text{Hom}_\mathcal{D}(-,M)_{| \mathcal{P}}$. 

%According to Proposition \ref{prop.Serre quotient functor} if $\mathcal{T}=\text{Ker}(F)$, which is a hereditary torsion class in $\text{mod}-\mathcal{P}$, and $\mathcal{T}=\text{Gen}(\mathcal{T}_0)$, for some subcategory $\mathcal{T}_0\subseteq\mathcal{T}$, then $\text{Im}(G)=\mathcal{T}_0^{\perp_{0,1}}$.

\begin{lema} \label{lem.FP2-alpha}
In the situation of the last paragraph let us put $\mathcal{T}=\text{Ker}(F)$.  Then $\mathcal{T}=\text{Gen}(\mathcal{T}_0)$, where $\mathcal{T}_0$ is the subcategory of $\mathcal{T}$ that consists of the objects $T$ in $\mathcal{T}$ that admit a projective presentation in $\text{mod}-\mathcal{P}$  $$(-,S_2)\longrightarrow (-,S_1)\longrightarrow (-,S_0)\rightarrow T\rightarrow 0, $$ where $S_i\in\mathcal{S}$ and $(-,S_i):=\text{Hom}_\mathcal{P}(-,S_i)$ for $i=0,1,2$.
\end{lema}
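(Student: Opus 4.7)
The plan has two stages: a reduction to epi images of $(-,S_0)$ via Proposition \ref{prop.Serre quotient functor}(4), then a refinement to 3-term $\mathcal{S}$-presentations using Corollary \ref{cor.morphism between coproducts as colimit} together with the weak kernels in $\mathcal{P}$.

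For the reduction, by Proposition \ref{prop.C-modules}(3), the set $\{(-,S):S\in\mathcal{S}\}$ is a set of projective generators of $\text{mod}-\mathcal{P}$. Applying Proposition \ref{prop.Serre quotient functor}(4) to $\text{mod}-\mathcal{P}$ with this set of generators yields $\mathcal{T}=\text{Gen}(\widetilde{\mathcal{T}}_0)$, where $\widetilde{\mathcal{T}}_0$ is the class of objects $T\in\mathcal{T}$ admitting an epimorphism $(-,S_0)\twoheadrightarrow T$ for some $S_0\in\mathcal{S}$. The claim will follow once we show $\widetilde{\mathcal{T}}_0\subseteq\text{Gen}(\mathcal{T}_0)$.

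Fix $T\in\widetilde{\mathcal{T}}_0$ with epi $(-,S_0)\twoheadrightarrow T$ and kernel $K$. Since $\mathcal{P}$ has weak kernels (Proposition \ref{prop.C-modules}(3)), $K$ admits a projective presentation by representables from $\mathcal{P}$, giving a 3-term presentation $(-,P_2)\xrightarrow{(-,\phi_2)}(-,P_1)\xrightarrow{(-,\phi_1)}(-,S_0)\to T\to 0$ in $\text{mod}-\mathcal{P}$ with $P_1,P_2\in\mathcal{P}$. Writing $P_i=\coprod_\lambda S_\lambda^{(i)}$ with $S_\lambda^{(i)}\in\mathcal{S}$, I would apply Corollary \ref{cor.morphism between coproducts as colimit} to the composable pair $(\phi_2,\phi_1)$: first refining $\phi_1$ as an $\alpha$-directed colimit of morphisms $S^{(1)}_\omega\to S_0$ in $\mathcal{S}$, then refining $\phi_2$ along the same indexing set so that the composability condition $\phi_1\circ\phi_2=0$ is preserved. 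This yields, via Yoneda and cokernels, an $\alpha$-directed system of objects $T_\omega\in\text{mod}-\mathcal{P}$ each admitting a 3-term $\mathcal{S}$-presentation, with $T\cong\varinjlim T_\omega$ and canonical maps $T_\omega\to T$.

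The main obstacle is ensuring the $T_\omega$ (or suitable modifications thereof) actually lie in $\mathcal{T}$, since $F(T_\omega)=\text{Coker}(H(S^{(1)}_\omega)\to H(S_0))$ need not vanish a priori, even though its directed colimit $F(T)=0$ does. My expected fix is to replace each $T_\omega$ by the canonical image of $T_\omega\to T$, which is a subobject of $T$ and hence lies in $\mathcal{T}$ by the hereditary torsion class structure (Proposition \ref{prop.Serre quotient functor}(2)), and to show via a cofinality argument on the indexing set that these images still jointly surject onto $T$ and inherit a 3-term $\mathcal{S}$-presentation from the $T_\omega$. Carrying out the inheritance step in the absence of Ab5 is the key technical hurdle.
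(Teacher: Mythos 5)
Your first stage is exactly the paper's: applying Proposition \ref{prop.Serre quotient functor}(4) to the projective generators $\{(-,S):S\in\mathcal{S}\}$ of $\text{mod}-\mathcal{P}$ reduces everything to showing that each cyclic object $M=(-,S_0)/N$ of $\mathcal{T}$ is an epimorphic image of an object of $\mathcal{T}_0$. The gap is in your second stage, and it is the one you flag yourself without closing it. Writing a three-term presentation of $M$ by representables of objects of $\mathcal{P}$ and approximating it via Corollary \ref{cor.morphism between coproducts as colimit} gives $M\cong\varinjlim T_\omega$ with each $T_\omega$ admitting a three-term $\mathcal{S}$-presentation, but nothing forces $T_\omega\in\mathcal{T}$, and membership in $\mathcal{T}$ is essential since $\mathcal{T}_0\subseteq\mathcal{T}$ by definition (and is needed later to apply Proposition \ref{prop.Serre quotient functor}(3)). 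Your proposed repair, replacing $T_\omega$ by the image of $T_\omega\to M$, does land in $\mathcal{T}$ (hereditary torsion class), but there is no reason these images inherit a three-term $\mathcal{S}$-presentation: the kernel of $T_\omega\twoheadrightarrow\text{Im}(T_\omega\to M)$ is not controlled by data of size $<\alpha$. Nor can one pass to a cofinal subsystem with $F(T_\omega)=0$: although $F$ preserves colimits (it is a left adjoint), $\varinjlim F(T_\omega)=F(M)=0$ does not make the terms vanish cofinally, even in Ab5 categories, and here $\mathcal{H}$ need not be Ab5. So the ``key technical hurdle'' you mention is the missing idea, not a routine verification.

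The paper sidesteps this obstruction by never leaving the triangulated category: by (the proof of) \cite[Lemma 4.7]{SS}, the objects of $\mathcal{T}$ are precisely the cokernels $\text{Coker}[(-,g)]$ for morphisms $g:P\to P_0$ in $\mathcal{P}$ whose cocone lies in $\mathcal{U}$. Realizing the cyclic object $M$ as $\text{Coker}[(-,g)]$ for a triangle $S[-1]\stackrel{v}{\longrightarrow}U\longrightarrow P\stackrel{g}{\longrightarrow}S$ with $U\in\mathcal{U}$, one invokes Proposition \ref{prop.Neeman}(1) to factor $v$ through some $S'\in\mathcal{S}$ (here is where $\alpha$-compactness of $S[-1]$ enters), completes to a morphism of triangles $S[-1]\to S'\to S''\stackrel{\rho}{\to}S$ with $S''\in\mathcal{S}$, and sets $T:=\text{Coker}[(-,\rho)]$. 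This $T$ is simultaneously in $\mathcal{T}$ (its defining morphism $\rho$ has cocone $S'\in\mathcal{U}$), has the presentation $(-,S')\to(-,S'')\to(-,S)\to T\to 0$, and surjects onto $M$ because $\rho$ factors through $g$. That simultaneous control of $\mathcal{T}$-membership and $<\alpha$-size is exactly what your purely functor-category approximation cannot deliver, so to complete your argument you would need to import this triangulated input (or an equivalent of Neeman's factorization) rather than refine the Lazard-type colimit argument.
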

\begin{proof}
We apply Proposition \ref{prop.Serre quotient functor}, taking $\mathcal{X}:=\{(-,S)\text{: }S\in\mathcal{S}\}$ as set of generators of $\text{mod}-\mathcal{P}$. The mentioned proposition says that $\mathcal{T}$ is a hereditary torsion class in $\text{mod}-\mathcal{P}$  generated by  the 'cyclic' objects of $\mathcal{T}$, i.e. those objects of $\mathcal{T}$ which are epimorphic image of $(-,S)$, for some $S\in\mathcal{S}$. Imitating the proof of  \cite[Lemma 8.30]{SS}, we just need to prove that if    $S\in\mathcal{S}$ and $M:=(-,S)/N$ is an object of $\mathcal{T}$ then there is an epimorpism $T\twoheadrightarrow M$, where $T\in\mathcal{T}$ admits a projective presentation in  $\text{mod}-\mathcal{P}$ as mentioned in the statement of the lemma. The proof is just an adaptation of that of \cite[Lemma 8.30]{SS}, that we indicate leaving the details to the reader. 

By the proof of \cite[Lemma 4.7]{SS},  $\mathcal{T}$ consists of the $X\in\text{mod}-\mathcal{P}$ for which there exists a triangle $U\longrightarrow P_1\stackrel{g}{\longrightarrow} P_0\stackrel{+}{\longrightarrow}$ in $\mathcal{D}$ such that $X\cong\text{Coker}[(-,g)]$, where $U\in\mathcal{U}$ and $P_1,P_0\in\mathcal{P}:=\text{Coprod}(\mathcal{S})$. For $X=M$ as above, one can then consider a triangle $S[-1]\stackrel{v}{\longrightarrow} U\longrightarrow P\stackrel{g}{\longrightarrow} S$, where $P\in\mathcal{P}$ and $U\in\mathcal{U}$,  such that $M\cong\text{Coker}[(-,g)]$. Since $S[-1]\in\mathcal{D}^\alpha$ Proposition \ref{prop.Neeman} says that $v$ factors through an object $S'\in\mathcal{S}$ and then we get a commutative diagram with triangles in rows:

$$\xymatrix{S[-1] \ar@{=}[d] \ar[r] & S' \ar[r] \ar[d] & S'' \ar[d]\ar[r]^{\rho}  & S \ar@{=}[d] \\ S[-1] \ar[r]_v & U \ar[r] & P \ar[r]_g & S}$$
where $S''$ may be taken in $\mathcal{S}$ since $\mathcal{U}\cap\mathcal{D}^\alpha$ is closed under extensions. We  then have an exact sequence $$(-,S')\longrightarrow (-,S'')\stackrel{(-,\rho)}{\longrightarrow}(-,S)\longrightarrow T\rightarrow 0,$$ where $T\in\mathcal{T}$, and hence there is an epimorphism $T\twoheadrightarrow M$ as desired.
\end{proof}

\begin{lema}  \label{lem.H(S) alpha-presented}
$H(\mathcal{S})$ consists of $\alpha$-presentable objects and  each $\alpha$-presentable object of $\mathcal{H}$ is isomorphic to one of them. 
\end{lema}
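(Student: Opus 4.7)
The proof splits naturally into the two assertions of the lemma, and I would handle them in sequence.

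For the first claim, that $H(S)$ is $\alpha$-presentable for every $S \in \mathcal{S}$, my plan is to invoke Proposition \ref{prop.preservation-alpha-presented} applied to the adjoint pair $F \dashv G$ between $\mathrm{mod}\text{-}\mathcal{P}$ and $\mathcal{H}$. The category $\mathrm{mod}\text{-}\mathcal{P}$ is locally $\alpha$-presentable with the $\alpha$-presentable projective generators $\{(-,S) : S \in \mathcal{S}\}$ (by Proposition \ref{prop.C-modules}), and by the adjunction $(F,G)$ together with Yoneda one computes $F((-,S)) = H(S)$. It therefore suffices to show that $G$ preserves $\alpha$-directed colimits. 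Since $G$ is fully faithful with essential image $\mathcal{F} = \mathcal{T}_0^{\perp_{0,1}}$ (combining Proposition \ref{prop.Serre quotient functor}(3) with Lemma \ref{lem.FP2-alpha}), this in turn reduces to showing that $\mathcal{F}$ is closed under $\alpha$-directed colimits in $\mathrm{mod}\text{-}\mathcal{P}$. The key input is that each $T \in \mathcal{T}_0$ admits a 2-step projective presentation $(-, S_2) \to (-, S_1) \to (-, S_0) \to T \to 0$ by the $\alpha$-presentable projectives $(-, S_i)$ (Lemma \ref{lem.FP2-alpha}); combined with the fact that $\alpha$-directed colimits in the locally $\alpha$-presentable abelian category $\mathrm{mod}\text{-}\mathcal{P}$ commute with finite limits, this forces both $\mathrm{Hom}(T,-)$ and $\mathrm{Ext}^1(T,-)$ to preserve $\alpha$-directed colimits, yielding closure of $\mathcal{T}_0^{\perp_{0,1}}$.

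For the second claim, given an $\alpha$-presentable $N \in \mathcal{H}$, I would apply Lemma \ref{lema.objects of H as directed colimits}(2) to realise $N = \varinjlim_{\omega \in \Omega} H(Z_\omega)$ as an $\alpha$-directed colimit with $Z_\omega \in \mathcal{S}$ and transition maps of the form $H(g_{\nu\omega})$ with $g_{\nu\omega}\colon Z_\nu \to Z_\omega$. The $\alpha$-presentability of $N$ forces $\mathrm{id}_N$ to factor through some $H(Z_{\omega_0})$, exhibiting $N$ as a direct summand of $H(Z_{\omega_0})$. The associated retraction $r\colon H(Z_{\omega_0}) \to N$ corresponds by the adjunction $H|_\mathcal{U} \dashv \mathrm{incl}$ to a morphism $\bar r\colon Z_{\omega_0} \to N$ in $\mathcal{D}$, which, since $Z_{\omega_0} \in \mathcal{D}^\alpha$ and $N \in \mathcal{U}$, factors through some $Z \in \mathcal{U} \cap \mathcal{D}^\alpha$ by Proposition \ref{prop.Neeman}(1); up to isomorphism $Z \in \mathcal{S}$. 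Transporting the section of $r$ through this factorization presents $N$ as a summand of $H(Z)$.

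The main obstacle I anticipate is to upgrade ``summand of $H(Z)$'' to $N \cong H(Z)$ for some $Z \in \mathcal{S}$. My plan here is to exploit the specific system from Lemma \ref{lema.objects of H as directed colimits}(2): the complementary summands $K_\omega := \ker(r_\omega)$ form a subsystem whose transition maps preserve the decomposition $H(Z_\omega) \cong N \oplus K_\omega$ and whose colimit vanishes (by exactness of $\alpha$-directed colimits in the locally $\alpha$-presentable $\mathcal{H}$). Since each $K_\omega$ is itself $\alpha$-presentable (as a summand of the $\alpha$-presentable object $H(Z_\omega)$ from part (a)), its identity morphism vanishes through some transition, and combined with the Karoubian property of the well-generated $\mathcal{D}$ (ensuring that summand decompositions in $\mathcal{D}$ yield objects of $\mathcal{U} \cap \mathcal{D}^\alpha$), this should allow the selection of a suitable $Z' \in \mathcal{S}$ realising $N$ as $H(Z')$, potentially by taking $Z'$ to be the cone of a carefully chosen morphism between $\mathcal{S}$-objects so that $N \cong H(Z')$ is realised as a cokernel of the form $\mathrm{Coker}_\mathcal{H}(H(f))$ with $f$ a morphism in $\mathcal{S}$.
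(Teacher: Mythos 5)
Your first assertion is handled essentially as in the paper: reduce via Proposition \ref{prop.preservation-alpha-presented} to showing that $G$ preserves $\alpha$-directed colimits, hence (by Proposition \ref{prop.Serre quotient functor} and Lemma \ref{lem.FP2-alpha}) to closure of $\mathcal{T}_0^{\perp_{0,1}}$ under $\alpha$-directed colimits, using the three-term projective presentations by $\alpha$-presentable objects $(-,S)$. One caveat: you justify the preservation of $\Hom(T,-)$ and $\Ext^1(T,-)$ by ``$\alpha$-directed colimits in $\text{mod}-\mathcal{P}$ commute with finite limits'', a fact the paper neither states nor needs; the correct (and sufficient) route, as in the paper, is to apply $\Hom(-,Y_i)$ levelwise, use exactness of $\alpha$-directed colimits in $\text{Ab}$ only, and identify the limit sequence via $\alpha$-presentability of the $(-,S_j)$ and of $T$. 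That is a repairable misattribution rather than a gap.

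The second assertion is where your proposal genuinely breaks down. Up to the point where the $\alpha$-presentable object $N$ is exhibited as a direct summand of some $H(Z_{\omega_0})$ with $Z_{\omega_0}\in\mathcal{S}$ you agree with the paper (your detour through Proposition \ref{prop.Neeman}(1) is redundant, since $Z_{\omega_0}$ already lies in $\mathcal{S}$, but harmless). What remains is exactly the closure of $H(\mathcal{S})\cong H(\mathcal{U}\cap\mathcal{D}^\alpha)$ under direct summands, and your plan for it does not work as stated. First, the transition maps only send $\Ker(u_\nu)$ into $\Ker(u_\omega)$; they need not respect the chosen splittings $H(Z_\omega)\cong N\oplus K_\omega$, only a ``triangular'' compatibility holds. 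More seriously, the vanishing of $\varinjlim K_\omega$ is deduced from ``exactness of $\alpha$-directed colimits in the locally $\alpha$-presentable $\mathcal{H}$'', i.e.\ from an Ab5-type condition on $\alpha$-directed colimits --- but this is precisely what is \emph{not} available: the whole point of Theorem \ref{teor.main result2} is that $\mathcal{H}$ is locally $\alpha$-presentable yet possibly non-Ab5 (for $\alpha>\aleph_0$ local $\alpha$-presentability does not imply exactness of $\alpha$-directed colimits). Right exactness of colimits only yields that the induced map $\varinjlim K_\omega\longrightarrow N$ is zero, not that $\varinjlim K_\omega=0$, so the subsequent steps (killing $1_{K_\omega}$ along the system, producing $Z'\in\mathcal{S}$ with $N\cong H(Z')$) remain a hope rather than an argument, as your own hedging (``should allow'', ``potentially'') signals. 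The paper closes this gap differently: it proves that $H(\mathcal{S})$ is closed under direct summands by adapting the penultimate paragraph of the proof of \cite[Theorem 8.31]{SS}, with $\mathcal{S}$ playing the role of $\mathcal{U}_0$ and finite subsets replaced by subsets of cardinality $<\alpha$; some argument of this kind (not relying on Ab5) is needed to complete your proof.
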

\begin{proof}
Recall that the Serre quotient  functor $F:\text{mod}-\mathcal{P}\longrightarrow\mathcal{H}$ satisfies that $F(\text{Hom}_\mathcal{D}(-,S))=H(S)$, for all $S\in\mathcal{S}$ (see \cite[Theorem 4.5]{SS}). Moreover, by Proposition \ref{prop.C-modules}, $\{(-,S)\text{: }S\in\mathcal{S}\}$ is a set of $\alpha$-presentable generators of $\text{mod}-\mathcal{P}$. To prove that $H(\mathcal{S})$ consists of $\alpha$-presentable objects we just need to check that $F$ preserves $\alpha$-presentable objects or, equivalently (see Proposition \ref{prop.preservation-alpha-presented}), that 
$G:\mathcal{H}\longrightarrow\text{mod}-\mathcal{P}$ preserves $\alpha$-filtered colimits. Due to the fully faithful condition of $G$, Lemma \ref{lem.FP2-alpha} and Proposition \ref{prop.Serre quotient functor}, our task reduces to check that $\mathcal{T}_0^{\perp_{0,1}}$ is closed under taking $\alpha$-filtered colimits, where $\mathcal{T}_0$ is as in Lemma \ref{lem.FP2-alpha}. 

Let  $(Y_i)_{i\in I}$ be an $\alpha$-direct system in  $\mathcal{T}_0^{\perp_{0,1}}$ and let $T\in\mathcal{T}_0$, for which we fix a projective presentation $$(-,S_2)\stackrel{(-,f_2)}{\longrightarrow} (-,S_1)\stackrel{(-,f_1)}{\longrightarrow} (-,S_0)\rightarrow T\rightarrow 0, $$ as in Lemma \ref{lem.FP2-alpha}. The fact that $\text{Ext}_{\text{mod}-\mathcal{P}}^k(T,Y_i)=0$ for $k=0,1$ and all $i\in I$ implies that the sequence of abelian groups $$0=(T,Y_i)\longrightarrow ((-,S_0),Y_i))\longrightarrow (S_1,Y_i)\longrightarrow ((-,S_2),Y_i)$$ is exact. Clearly, when $i$ varies, we get an $I$-direct system of exact sequences in $\text{Ab}$. When we apply the direct limit functor, which is exact in $\text{Ab}$, we get an exact sequence

 $$0=\varinjlim (T,Y_i)\longrightarrow\varinjlim ((-,S_0),Y_i))\longrightarrow\varinjlim ((-,S_1),Y_i)\longrightarrow\varinjlim ((-,S_2),Y_i)\hspace*{1cm}(\dagger)$$ 

On the other hand, since the class of $\alpha$-presentable objects is closed under cokernels, we know that $\mathcal{T}_0$ consists of $\alpha$-presentable objects, thus implying in particular that $(T,\varinjlim Y_i)\cong\varinjlim (T,Y_i)=0$. In addition, by the $\alpha$-presentability of $(-,S)$,  the canonical map $\varinjlim ((-,S),Y_i)\longrightarrow ((-,S),\varinjlim Y_i)$ is an isomorphism, for all $S\in\mathcal{S}$. It follows that the sequence $(\dagger)$ above is isomorphic to the sequence  $$0= (T,\varinjlim Y_i)\longrightarrow ((-,S_0),\varinjlim Y_i))\stackrel{(-,f_1)^*}{\longrightarrow} ((-,S_1),\varinjlim Y_i)\stackrel{(-,f_2)^*}{\longrightarrow} ((-,S_2),\varinjlim Y_i).\hspace*{1cm}(\dagger\dagger)$$ In particular, this latter sequence is also exact and we have that  $\text{Ext}_{\text{mod}-\mathcal{P}}^1(T,\varinjlim Y_i)=\frac{\text{Ker} (-,f_2)^*}{\text{Im}(-,f_1)^*}=0$. Therefore $\varinjlim Y_i\in\mathcal{T}_0^{\perp_{0,1}}$ as desired. 

It remains to prove that any $\alpha$-presentable object $M$ of $\mathcal{H}$ is isomorphic to $H(S)$, for some $S\in\mathcal{S}$. Due to Lemma \ref{lema.objects of H as directed colimits}, we know that $M\cong\varinjlim H(S_i)$, for some $\alpha$-direct system $[(H(S_i))_{i\in I},(H(f_{ij}:H(S_i)\longrightarrow H(S_j))_{i\leq j}]$ in $H(\mathcal{S})$. Due to the $\alpha$-presentability of $M$, a standard argument shows that $M$ is isomorphic to a direct summand of some $H(S_j)$. The proof will be finished once we prove that $H(\mathcal{S})\cong H(\mathcal{U}\cap\mathcal{D}^\alpha)$ is closed under taking direct summands. This follows from an easy adaptation of the previous to last paragraph of the proof of \cite[Therem 8.31]{SS}.  $\mathcal{U}_0$ should be replaced by $\mathcal{S}$ here and the finite subset $J\subseteq I$  there by a subset of cardinality $|J|<\alpha$.
\end{proof}

{\bf Proof of Theorem \ref{teor.main result2}:} 

It is a direct consequence of Lemmas \ref{lema.objects of H as directed colimits} and \ref{lem.H(S) alpha-presented}.

\end{document}